\theoremstyle{plain}
    \newtheorem{thm}{Theorem}[section]
    \newtheorem{claim}[thm]{Claim}
    \newtheorem{lemma}[thm]{Lemma}
    \newtheorem{question}[thm]{Question}
    \newtheorem{theorem}[thm]{Theorem}
\theoremstyle{definition}
    \newtheorem{definition}[thm]{Definition}
    \newtheorem{remark}[thm]{Remark}
\theoremstyle{remark}
    \newtheorem{setup}[thm]{}
\newcommand{\BCC}{\mathbb{C}}
\newcommand{\BPP}{\mathbb{P}}
\newcommand{\BRR}{\mathbb{R}}
\newcommand{\BZZ}{\mathbb{Z}}
\newcommand{\SC}{\mathcal{C}}
\newcommand{\SK}{\mathcal{K}}
\newcommand{\SO}{\mathcal{O}}
\newcommand{\alb}{\operatorname{alb}}
\newcommand{\Aut}{\operatorname{Aut}}
\newcommand{\GL}{\operatorname{GL}}
\newcommand{\id}{\operatorname{id}}
\newcommand{\Imm}{\operatorname{Im}}
\newcommand{\Ker}{\operatorname{Ker}}
\newcommand{\NS}{\operatorname{NS}}
\newcommand{\torsion}{\operatorname{torsion}}
\newcommand{\Alb}{\operatorname{Alb}}
\newcommand{\OH}{\operatorname{H}}
\newcommand{\ratmap}
{{\,\cdot\negmedspace\cdot\negmedspace\cdot\negmedspace\to\,}}
\begin{document}

\title[A theorem of Tits type for compact K\"ahler manifolds]{
A theorem of Tits type for compact K\"ahler manifolds}

\author{De-Qi Zhang}
\address
{
\textsc{Department of Mathematics} \endgraf
\textsc{National University of Singapore, 2 Science Drive 2,
Singapore 117543}}
\email{matzdq@nus.edu.sg}

\begin{abstract}
We prove a theorem of Tits type for compact K\"ahler manifolds,
which has been conjectured in the paper \cite{KOZ}.
\end{abstract}

\subjclass[2000]{14J50, 14E07, 32M05, 32H50, 32Q15}
\keywords{automorphism group, K\"ahler manifold, Tits alternative, topological entropy}

\thanks{The author is supported by an ARF of NUS}

\maketitle

\section{Introduction}

We work over the field $\BCC$ of complex numbers.
In this note, we prove first the following result, which also gives an affirmative
answer to the {\it conjecture of Tits type for compact K\"ahler manifolds}
as formulated in \cite{KOZ}.

\begin{theorem}\label{ThA}
Let $X$ be an $n$-dimensional $(n \ge 2)$
compact K\"ahler manifold and $G$ a subgroup of $\Aut(X)$.
Then one of the following two assertions holds:
\begin{itemize}
\item[(1)]
$G$ contains a subgroup isomorphic to the non-abelian free group $\BZZ * \BZZ$,
and hence $G$ contains subgroups isomorphic to non-abelian free groups of all countable ranks.
\item[(2)]
There is a finite-index subgroup $G_1$ of $G$ such that
the induced action $G_1 | H^{1,1}(X)$ is solvable and Z-connected.
Further, the subset
$$N(G_1) := \{g \in G_1 \,\, | \,\, g \,\,\, \text{\rm is of null entropy}\}$$
of $G_1$ is a normal subgroup of $G_1$ and the quotient group $G_1/N(G_1)$ is a
free abelian group of rank $r \le n-1$ {\rm (see \ref{ThB} below for the boundary cases)}.
\end{itemize}
\end{theorem}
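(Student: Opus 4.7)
My strategy is to pull back the classical Tits alternative from the cohomological representation $\rho$, and then to refine the solvable case using dynamical degrees, the Gromov--Yomdin theorem, and the Khovanskii--Teissier/Hodge--Riemann inequalities on $\OH^{1,1}(X)$.

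First, I would consider the representation $\rho: G \to \GL(\OH^{1,1}(X, \BRR))$ coming from pullback of $(1,1)$-classes. By the classical Tits alternative applied to this linear image, either $\rho(G)$ contains a copy of the free group $\BZZ * \BZZ$, or $\rho(G)$ has a finite-index solvable subgroup. In the former case, projectivity of free groups allows one to split any surjection from a subgroup of $G$ onto $\BZZ * \BZZ$, producing a copy of $\BZZ * \BZZ$ inside $G$ itself; free subgroups of arbitrary countable rank then sit inside $\BZZ * \BZZ$ by standard group theory. Hence alternative (1) of the theorem holds, and I may henceforth assume $\rho(G)$ is virtually solvable. Passing to a suitable finite-index subgroup $G_1 \le G$, I can moreover arrange that $\rho(G_1)$ is solvable and Zariski-connected, yielding the first assertion of alternative (2).

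Second, the Gromov--Yomdin theorem gives $h_\topo(g) = \log \lambda_1(g)$ for each $g \in \Aut(X)$, where $\lambda_1(g)$ is the spectral radius of $\rho(g)$ on $\OH^{1,1}(X)$, so $g$ has null entropy if and only if $\rho(g)$ has spectral radius $1$. A classical theorem of Lieberman and Fujiki tells us that $\Ker \rho$ is virtually contained in $\Aut_0(X)$, which acts trivially on cohomology; after shrinking $G_1$ I may assume $\Ker(\rho|_{G_1}) \subseteq N(G_1)$. Since $\rho(G_1)$ is Zariski-connected and solvable, Lie--Kolchin provides a simultaneous triangularisation on $\OH^{1,1}(X, \BCC)$, and within this triangular group the elements of spectral radius $1$ form the normal, Zariski-closed unipotent subgroup. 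Pulling back, $N(G_1)$ is a normal subgroup of $G_1$. The quotient $G_1 / N(G_1)$ then embeds into $\rho(G_1)$ modulo its unipotent part, which is Zariski-connected, solvable, and consists of simultaneously diagonalisable elements on $\OH^{1,1}(X)$, hence abelian. Taking logarithms of the moduli of the simultaneous eigenvalues produces an injective homomorphism
\[
\psi: G_1 / N(G_1) \longrightarrow \BRR^{r_0}, \qquad r_0 := \dim_{\BRR} \OH^{1,1}(X, \BRR),
\]
whose image is a discrete subgroup, so $G_1 / N(G_1)$ is free abelian of some finite rank $r \le r_0$.

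The main obstacle is the sharper bound $r \le n - 1$. To each non-trivial character in the simultaneous eigenvalue decomposition, a Perron--Frobenius argument on the salient Kähler cone should attach a nef common eigenclass $v_i$ in the closure of the Kähler cone of $X$. If $r \ge n$ such classes $v_1, \ldots, v_n$ with pairwise distinct non-trivial characters existed, the intersection number $v_1 \cdots v_n$ would be nonzero by nefness combined with a small Kähler perturbation, yet it would transform under every $g \in G_1$ by the product of the corresponding characters, forcing that product to be trivial; combined with the Khovanskii--Teissier log-concavity of mixed intersection numbers and the Hodge--Riemann signature $(1, h^{1,1} - 1)$ of the intersection form on $\OH^{1,1}(X)$ with respect to a Kähler polarisation, this should yield a contradiction between the eigenvalue constraints and positivity. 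This step is the technical heart of the argument, and it is precisely where the compact Kähler hypothesis on $X$ is essential.
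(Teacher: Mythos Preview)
Your reduction via the Tits alternative and the lifting of a free subgroup back to $G$ are fine and match the paper. The substantive issues are all in the second half.

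\textbf{The bound $r\le n-1$.} Your sketch assumes you can attach, to each of $n$ distinct characters, a \emph{nef} common eigenclass $v_i\in\overline{\SK}(X)$. For a non-commutative $G_1$ this is exactly the obstruction the paper isolates: the Lie--Kolchin theorem for a cone (Theorem~\ref{LK}) gives you \emph{one} common nef eigenvector $M_1$, but applying Perron--Frobenius or Lie--Kolchin a second time forces you to pass to the quotient cone $\Imm(\overline{\SK}(X)\to H^{1,1}(X,\BRR)/\BRR M_1)$, which need not be closed; the resulting eigenvector need not lift to a nef class. The paper's substitute is the \emph{quasi-nef sequence} of~\ref{Ms}: one works not with nef $M_i$ but with classes $M_i$ for which each partial product $M_1\cdots M_r$ lies in $\overline{P^r(X)}$ as a limit of products with nef last factor. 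This is what allows the inductive construction of characters $\chi_1,\dots,\chi_n$ and the map $\psi:G\to\BRR^{n-1}$.

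Second, even granting nef common eigenclasses $v_1,\dots,v_n$, your claim that $v_1\cdots v_n\ne 0$ ``by nefness combined with a small K\"ahler perturbation'' is wrong: perturbation gives only $v_1\cdots v_n\ge 0$, and products of nef classes vanish routinely. The paper's non-vanishing input is Lemma~\ref{pre}(5): if $g^*$ scales $M_1\cdots M_{s-1}M_s$ and $M_1\cdots M_{s-1}M_s'$ by different factors, then $M_1\cdots M_{s-1}M_sM_s'\ne 0$ in $N^{s+1}(X)$. This is the Hodge--Riemann/Dinh--Sibony step you allude to, but it has to be fed the quasi-nef structure, not bare nefness, and it is used inductively (see the proof of Claim~\ref{pfc1}), not in a single stroke at level $n$.

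\textbf{Normality of $N(G_1)$ and discreteness.} Your assertion that ``within this triangular group the elements of spectral radius $1$ form the normal, Zariski-closed unipotent subgroup'' is false as stated (diagonal elements with unit-modulus entries are not unipotent); it can be rescued using Kronecker's theorem on the integral lattice $H^2(X,\BZZ)$ together with Z-connectedness, but you have not said so. More seriously, you assert that the image of your $\psi$ in $\BRR^{r_0}$ is discrete with no argument. In the paper this is Claim~\ref{pfc2}, and it is not formal: one must bound $d_1(g^{\pm 1})$ in terms of the $\chi_i(g)$ (again via the quasi-nef sequence and Lemma~\ref{pre}(5)), and then invoke the integrality of $g^*|H^2(X,\BZZ)$ to conclude that only finitely many $g^*$ have all $\chi_i$ near $1$. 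Without the quasi-nef construction you have no handle on $d_1(g)$ in terms of your simultaneous eigenvalues, so discreteness is a genuine gap in your outline.
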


In Theorems \ref{ThA} and \ref{ThB}, the action $G | H^{1,1}(X)$ is
{\it Z-connected} if its Zariski-closure in $\GL(H^{1,1}(X))$ is connected
with respect to the Zariski topology.
$X$ is said to be {\it almost homogeneous} (resp. {\it dominated by} some closed subgroup $H$
of the identity connected component $\Aut_0(X)$ of $\Aut(X)$)
if some $\Aut_0(X)$-orbit (resp. $H$-orbit) is dense open in $X$.
A compact K\"ahler manifold is {\it weak Calabi-Yau}
if the irregularity $q(X) := h^1(X, \SO_X) = 0$ and the {\it Kodaira dimension}
$\kappa(X) = 0$. A projective manifold is {\it ruled} if it is
birational to $\BPP^1 \times$ (another projective manifold).

\begin{theorem}\label{ThB}
Let $X$ be an $n$-dimensional $(n \ge 2)$
compact K\"ahler manifold and $G$ a subgroup of $\Aut(X)$
such that the induced action $G|H^{1,1}(X)$ is solvable and
Z-connected. Then the set $N(G)$ as in $\ref{ThA}$ is a normal subgroup of
$G$ such that $G/N(G) \cong \BZZ^{\oplus r}$ with $r \le n-1$.
If $r = n-1$, then the algebraic dimension
$a(X) \in \{0, n\}$ $(${\rm cf.}~\cite[3.2]{Ue}$)$,
the anti-Kodaira dimension $\kappa(X, -K_X) \le 0$
and one of Cases $(1) \sim (4)$ below occurs.
\begin{itemize}
\item[(1)]
$X$ is bimeromorphic to a complex torus.
\item[(2)]
$X$ is a weak Calabi-Yau K\"ahler manifold.
\item[(3)]
$\Aut_0(X) = (1)$, $q(X) = 0$ and the Kodaira dimension
$\kappa(X) = -\infty$. Further,
$X$ is rationally connected provided that $X$ is projective and uniruled.
\item[(4)]
$\Aut_0(X)$ is a non-trivial linear algebraic group. $X$ is an almost homogeneous
projective manifold and dominated
by every positive-dimensional characteristic closed subgroup of $\Aut_0(X)$.
Hence $X$ is unirational and ruled.
In particular, $X$ is a rational variety,
unless $\dim X \ge 4$ and $\Aut_0(X)$ is semi-simple.
\end{itemize}
\end{theorem}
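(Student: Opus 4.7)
The plan is to first apply the Lie--Kolchin theorem to the Zariski closure of $G|H^{1,1}(X,\BCC)$ in $\GL(H^{1,1}(X))$. Solvability together with Z-connectedness produces a complete flag of invariant subspaces and hence a simultaneous upper-triangular form, giving characters $\chi_i : G \to \BCC^*$. By the Gromov--Yomdin theorem, the topological entropy of $g \in G$ equals $\log$ of the spectral radius of $g^*$ on $H^{1,1}(X)$, which in this triangular form equals $\max_i \log|\chi_i(g)|$. Thus
$$N(G) = \bigcap_i \Ker\bigl(\,g \mapsto \log|\chi_i(g)|\,\bigr)$$
is a normal subgroup, and $G/N(G)$ embeds into $\BRR^{h^{1,1}(X)}$ as a torsion-free discrete subgroup, so it is free abelian of some finite rank $r$. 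The bound $r \le n-1$ is the theorem of Dinh--Sibony on commuting positive-entropy automorphisms of compact K\"ahler manifolds.

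\textbf{Step 2 (Extremality $r=n-1$ forces $a(X) \in \{0,n\}$ and $\kappa(X,-K_X) \le 0$).} Lifting a basis of $G/N(G)$ yields commuting $g_1, \ldots, g_{n-1}$ of positive entropy. Dinh--Sibony then supplies a family of common nef real eigenclasses $\eta_1, \ldots, \eta_n \in H^{1,1}(X,\BRR)$ with $\eta_1 \cdots \eta_n > 0$, $g_j^*\eta_i = \lambda_{ij}\eta_i$, $\lambda_{ij} > 0$ and $\prod_i \lambda_{ij} = 1$, and with the matrix $(\log \lambda_{ij})$ of full rank $n-1$. Every $G$-invariant pseudo-effective $(1,1)$-class is then a non-negative real combination of the $\eta_i$. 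Applying this to an invariant effective multiple of $-K_X$ gives $\kappa(X,-K_X) \le 0$; applying it to the fibre class of a $G$-equivariant meromorphic reduction $X \ratmap B$ (obtained by replacing $G$ by a finite-index subgroup and pushing the algebraic reduction to a $G$-invariant one) rules out $0 < a(X) < n$, because the induced action on $B$ would retain too few commuting positive-entropy generators to sustain $r = n-1$.

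\textbf{Step 3 (Classification into Cases (1)--(4)).} I would then branch on $\kappa(X)$ and $q(X)$, always using extremality to collapse intermediate possibilities. If $\kappa(X) \ge 0$, $G$-equivariance of the Iitaka fibration combined with $\kappa(X,-K_X) \le 0$ forces $\kappa(X) = 0$: if $q(X) > 0$, the $G$-equivariant Albanese map is constrained by the simplex structure of the $\eta_i$ to be bimeromorphic, yielding Case (1); if $q(X) = 0$, we are in Case (2). If $\kappa(X) = -\infty$, a positive-dimensional Albanese image would produce a nontrivial equivariant fibration over an abelian variety and a drop in rank, forcing $q(X) = 0$. One then splits on $\Aut_0(X)$. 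Triviality of $\Aut_0(X)$ gives Case (3); rational connectedness in the projective uniruled subcase follows by examining the $G$-equivariant MRC fibration, whose base must be a point lest it absorb positive-entropy generators. If $\Aut_0(X) \neq (1)$, then $q(X)=0$ forces via the Chevalley decomposition that $\Aut_0(X)$ is linear algebraic, and a Borel fixed-point-type argument combined with the extremality of $r$ shows that every characteristic positive-dimensional closed subgroup must already have an open orbit on $X$. The remaining statements of Case (4) (unirationality, ruledness, rationality outside the high-dimensional semi-simple subcase) follow from classical results on almost homogeneous varieties.

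\textbf{Main obstacle.} The principal technical difficulty lies in Case (4): upgrading almost homogeneity under $\Aut_0(X)$ to domination by \emph{every} positive-dimensional characteristic closed subgroup, and delineating precisely the exceptional semi-simple, high-dimensional subcase where rationality may fail. This requires a careful interplay between the maximality $r=n-1$, the structure theory of linear algebraic groups acting on compact K\"ahler manifolds, and the geometry of equivariant minimal models. A secondary subtlety is verifying, in Step 2, that an intermediate algebraic reduction genuinely descends the $G$-action with a strict rank drop; this demands replacing $G$ by a controlled finite-index subgroup so that the reduction map itself becomes equivariant.
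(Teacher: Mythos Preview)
The central gap is the repeated, implicit assumption that one may work with a commuting family of positive-entropy automorphisms. In Step~1 you obtain $G/N(G)\cong\BZZ^r$ and then write that ``the bound $r\le n-1$ is the theorem of Dinh--Sibony on commuting positive-entropy automorphisms''; in Step~2 you write that ``lifting a basis of $G/N(G)$ yields commuting $g_1,\dots,g_{n-1}$''. Neither is justified. The Dinh--Sibony result \cite{DS04} requires $G$ itself to be abelian in order to produce common nef eigenvectors via Perron--Frobenius, and lifts of a basis of the abelian quotient $G/N(G)$ have no reason to commute in $G$: the extension $1\to N(G)\to G\to\BZZ^r\to1$ need not split, and $N(G)$ need not act trivially on $H^{1,1}(X)$. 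This is precisely the obstacle singled out in the introduction of the paper (``without the commutativity, one could not produce enough common nef eigenvectors~$\dots$''). Your triangularisation over $\BCC$ yields $h^{1,1}(X)$ characters, and embeds $G/N(G)$ into $\BRR^{h^{1,1}(X)}$, but gives no mechanism to cut the rank down to $n-1$; that step is the heart of the theorem.

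The paper circumvents exactly this by replacing Dinh--Sibony's common nef eigenvectors with the \emph{quasi-nef sequence}~\ref{Ms}: one applies the Lie--Kolchin theorem for a cone \cite{KOZ} not to $H^{1,1}(X,\BCC)$ but iteratively to the closures of the cones $M_1\cdots M_{r}.\overline{\SK}(X)$ inside $M_1\cdots M_r.H^{1,1}(X,\BRR)$, producing classes $M_1,\dots,M_n$ (with only $M_1$ nef) and exactly $n$ positive real characters $\chi_1,\dots,\chi_n$ with $\prod\chi_i=1$. The identification $\Ker\psi=N(G)$ and the discreteness of $\Imm\psi\subset\BRR^{n-1}$ (your proposal also asserts discreteness without argument) are then proved using Lemma~\ref{pre}(5), which plays the role of \cite[Lemme~4.4]{DS04} without requiring commuting automorphisms. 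Likewise, the classification when $r=n-1$ is not obtained via a simplex of common nef eigenclasses but through the reduction Lemma~\ref{reduction}: any $G$-equivariant surjection $X\to Y$ with $0<\dim Y<n$ forces $r\le n-2$, because one may start the quasi-nef sequence with pullbacks from $Y$ and use the extra relation $\chi_1\cdots\chi_{\dim Y}=1$. Your Steps~2--3 presuppose the common-eigenvector package you have not established; once the quasi-nef sequence is in place, the case analysis proceeds along the lines you sketch, but through Lemma~\ref{reduction} rather than through manipulation of $\eta_1,\dots,\eta_n$.
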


A remark on the uniruledness in Theorem \ref{ThB} (3):
for a projective manifold $X$, the {\it good minimal model conjecture}
claims that $X$ is uniruled if and only if
$\kappa(X) = -\infty$; this conjecture has been confirmed
when $\dim X \le 3$; see \cite[\S 3.13]{KM}.

We refer to \cite{Gr}, \cite{Yo}, \cite{Fr}, \cite{DS04} and \cite{DS05}
for the definitions of the $i$-th {\it dynamical degrees} $d_i(g)$ and
the {\it topological entropy} $h(g)$ for $g \in \Aut(X)$. It is known that
$$h(g) = \max_{1 \le i \le n} \log d_i(g)
= \max \{\log |\lambda| \, ; \, \lambda \,\, \text{is an eigenvalue of} \,\,
g^* \, | \, (\oplus_{i \ge 0}H^i(X, \BCC))\};$$
see also \cite[Proposition 5.7]{Di}.
And $g \in \Aut(X)$ is of {\it positive entropy} (resp. {\it null entropy})
if and only if $d_1(g) > 1$ or equivalently $h(g) > 0$ (resp. $d_1(g) = 1$ or equivalently
$h(g) = 0$).

Theorem \ref{ThA} is an analogue to the famous {\it Tits alternative theorem} \cite[Theorem 1]{Ti}:
For a subgroup $H \le \GL_m(\BCC)$ with $m \ge 1$, either
\begin{itemize}
\item[(i)]
$H$  contains a subgroup isomorphic to $\BZZ * \BZZ$, or
\item[(ii)]
$H$ contains a solvable subgroup of finite index.
\end{itemize}

When $G$ is abelian, Theorem \ref{ThA} follows from \cite{DS04}.
The key ingredients of our proof are: the very inspiring results of Dinh-Sibony \cite{DS04},
especially the Hodge-Riemann type result \cite[Corollary 3.5]{DS04},
the theorem of Lie-Kolchin type for a cone
\cite[Theorem 1.1]{KOZ}, and the trick of considering the
'quasi nef sequence' in \ref{Ms}.

\begin{remark}
$ $
\begin{itemize}
\item[(1)]
For a projective manifold $X$, if we replace $H^{1,1}(X)$ by
$\NS(X) \otimes_{\BZZ} \BCC$,
the same statements of Theorems \ref{ThA}
and \ref{ThB} hold; to prove them, we just replace $H^2(X, \BZZ)$ and the K\"ahler
cone by respectively $\NS(X)/(\torsion)$ and the ample cone.
\item[(2)]
For Theorem \ref{ThB}, it is easy to show that $r < h^{1,1}(X)$.
However, $h^{1,1}(X)$ has no upper bound even for surfaces.
Our bound $r \le n-1$ is optimal as seen in \cite[Example 4.5]{DS04},
where their examples $X$ are abelian varieties (and hence $\kappa(X) = 0$).
Theorem \ref{ThB} shows that, indeed, such optimal cases happen only when
$\kappa(X) \le 0$. More generally, one has $r \le \max\{0, \, n - 1 - \kappa(X)\}$ (optimal!)
by Lemma \ref{kappa}.
\item[(3)]
When $X$ is a surface, Theorem \ref{ThA} says that either $G$ is very
chaotic, or has at most one dynamically interesting symmetry.
\item[(4)]
The proofs of Theorems \ref{ThA} and \ref{ThB} are straightforward, with no
any fancy stuff.
\end{itemize}
\end{remark}

\noindent
{\bf What are the obstacles} (in proving Theorem \ref{ThB} for non-commutative $G$)?

First, without the commutativity, one could not produce enough common {\it nef} (as usually
desired) eigenvectors $L_i$ of $G$ in the closure $\overline{\SK}(X)$
of the K\"ahler cone of $X$,
in order to construct a homomorphism $\psi: G \to \BRR^{n-1}$.
Second, even if one has $n-1$ of such $L_i$, the non-vanishing of
$L_1 \cdots L_{n-1}$ (needed to show the injectivity of $\psi$ modulo $N(G)$)
could not be checked, unless, being lucky enough, $L_i$'s give rise to pairwise
distinct characters of $G$.
A naive approach is to use induction on $h^{1,1}(X)$
and apply the generalized Perron-Frobenius theorem in \cite{Bi} to the cone:
$\Imm(\overline{\SK}(X) \to H^{1,1}(X, \BRR)/(\BRR L_1))$;
see \ref{ne} for the notation. However, the latter cone may not be
closed and hence a common eigenvector of $G$ in the closure of the latter cone
is of the form $L_2 + \BRR L_1$ (a coset) but with $L_2$ {\it not necessarily} in the cone $\overline{\SK}(X)$.
Fortunately, these difficulties are taken care by considering the
quasi nef sequence in \ref{Ms}, and the latter is further exploited in Lemma \ref{reduction}.

To the best of our knowledge, there
are only partial solutions to the conjecture of Tits type for complex varieties,
like \cite{KOZ}
and \cite{Ki}.
See also \cite{Og} and \cite{Z1} for related results.

\par \vskip 1pc \noindent
{\bf Acknowledgement}

I thank A.~Fujiki for the discussion on
algebraic reduction, K.~Oguiso for encouraging me to generalize the result
to the K\"ahler case, T.-C.~Dinh and N.~Sibony
for the comments and for pointing out \cite[page 301]{DS04}
related to \ref{Ms} below, and the referee for the suggestions.

\section{Proof of Theorems \ref{ThA} and \ref{ThB}}

\begin{setup}\label{ne}
{\bf Numerically equivalent cohomology classes and K\"ahler $(k, k)$-forms}
\par
Let $X$ be a compact K\"ahler manifold of dimension $n$.
Denote $H^{i,i}(X, \BRR) = H^{i,i}(X) \cap H^{2i}(X, \BRR)$.
We use letters of upper case like $D, L, M$ to denote elements of $H^{i,i}(X)$.
By abuse of notation, the {\it cup product} $L \cup M$ for $L \in H^{i,i}(X)$
and $M \in H^{j,j}(X)$ is denoted as $L . M$ or simply $L \, M$.

Let $N^r(X)$ be the quotient of $H^{r,r}(X, \BRR)$ modulo
numerical equivalence: for $D_1, D_2 \in H^{r,r}(X, \BRR)$,
$D_1 \equiv D_2$ ({\it numerically equivalent})
if and only if $(D_1 - D_2) \, L_1 \cdots L_{n-r} = 0$
for all $L_i$ in $H^{1,1}(X, \BRR)$.
Denote by $[D_1] \in N^r(X)$ the class containing $D_1$,
but {\it we will write} $D_1 \in N^r(X)$ by abuse of notation.
Notice that numerical equivalence may {\it not} coincide with
cohomological equivalence when $r \le n-2$.

The {\it cone} $\overline{\SK}(X) \subset H^{1,1}(X, \BRR)$ is defined to be
the closure of the {\it K\"ahler cone} $\SK(X)$ of $X$.
Elements of $\overline{\SK}(X)$ are said to be {\it nef}.

We now use the definitions
of \cite[\S A1]{NZ}. Let \( \omega \) be a \( C^{\infty} \)-\( (k, k) \)-form on \( X \)
with \( 1 \leq k \leq n \).
For a local coordinate system \( (z_{1}, z_{2}, \ldots, z_{n}) \) of \( X \),
our \( \omega \) is locally expressed as
\[ \omega = (\sqrt{-1})^{k^{2}}
\sum\nolimits_{I, J \subset \{1, 2, \ldots, n\}}
a_{I, J} d z_{I} \wedge d \bar{z}_{J} \]
with \( C^{\infty} \)-functions \( a_{I, J} \), where
\( \sharp I = \sharp J = k \) and
\( d z_{I} := d z_{i_{1}} \wedge d z_{i_{2}} \wedge \cdots \wedge d
z_{i_{k}} \)
with \( I = \{ i_{1}, i_{2}, \ldots, i_{k}\} \) and \( i_{1} <
i_{2} < \cdots < i_{k} \).
This \( \omega \) is called a \emph{K\"ahler} \( (k, k) \)-form if
it is \( d \)-closed (\( d \omega = 0 \)) and
real (\( \overline{\omega} = \omega \)), and if the matrix
\( (a_{I, J}) \) is positive-definite everywhere on \( X \).
Note that this is just the usual definition of K\"ahler
form when \( k = 1 \).
Inside of the real vector space
\( \OH^{k, k}(X, \BRR) \),
the set \( P^{k}(X) \) of the classes \( [\omega] \) of
K\"ahler \( (k, k) \)-forms \( \omega \) on \( X \)
is a strictly convex open cone. It is called the
\emph{K\"ahler cone of degree} \( k \).
Its closure in \( \OH^{k, k}(X, \BRR) \) is
denoted as \( \overline{P^{k}(X)} \).
\end{setup}

\begin{setup}\label{Ms} {\bf The quasi nef sequence}.
Let $X$ be a compact K\"ahler manifold of dimension $n$.
Consider nonzero elements
$M_1, \dots, M_s$ (with $1 \le s \le n$) in $H^{1,1}(X, \BRR)$
satisfying:
\end{setup}

\begin{itemize}
\item[(i)]
$M_1$ is nef, and
\item[(ii)]
For every $2 \le r \le s$,
there are nef $L_r(j) \in \overline{\SK}(X)$ such that
$$H^{r,r}(X, \BRR) \setminus \{0\} \, \ni \, M_1 \cdots M_r \ = \
\lim_{j \to \infty} (M_1 \cdots M_{r-1}  \, L_r(j)).$$
So $M_1 \cdots M_r \ne 0$ in $N^r(X)$ by Lemma \ref{pre} (1) below.
\end{itemize}

\begin{lemma}\label{pre}
Let $X$ and $M_r$ $(1 \le r \le s)$ be as in $\ref{Ms}$. Then the following are true.
\begin{itemize}
\item[(1)]
For all $i_1 \le \cdots \le i_t \le s$
with $s+t \le n$, we have {\rm (cf. \ref{ne})}:
$$M_1 \cdots M_s \in \overline{P^{s}(X)}, \hskip 1pc (M_1 \cdots M_{s}) \,
(M_{i_1} \cdots M_{i_t}) \in \overline{P^{s+t}(X)}.$$
In particular, $M_1 \cdots M_s \, L_{s+1} \cdots L_n$ $> 0$ for all
$L_i$ in the K\"ahler cone $\SK(X)$, and hence $M_1 \cdots M_s \ \ne \ 0$ in $N^s(X)$.

\item[(2)]
Suppose $1 \le s \le n-1$ and all
$L_{s}, \dots, L_{n-1} \in \SK(X)$.
Then the quadratic form
$$q(x, y) := -M_1 \dots M_{s-1} \, L_{s} \cdots L_{n-2} \, x \, y$$
is semi-positive over the set
$$P(M_1 \cdots M_{s-1} \, L_{s} \cdots L_{n-1}) :=
\{z \in H^{1,1}(X, \BRR) \,\, | \,\, M_1 \cdots M_{s-1} \, L_{s} \cdots L_{n-1} \, z = 0\}$$
of $M_1 \cdots M_{s-1} \, L_{s} \cdots L_{n-1}$-primitive classes; see \cite[\S 3]{DS04}
for relevant material.
\item[(3)]
We have $q(M_{s}, M_{s}) \le 0$.

\par
In the following, assume $1 \le s \le n-1$, and take $M_s' \in H^{1,1}(X, \BRR)$ such that
$$H^{s,s}(X, \BRR) \setminus \{0\} \, \ni \, M_1 \cdots M_{s-1} \, M_s' \ = \
\lim_{j \to \infty} (M_1 \cdots M_{s-1} \, L_s(j)')$$
for some $L_s(j)' \in \overline{\SK}(X)$.
So this $M_s'$ enjoys the properties of $M_s$.

\item[(4)]
Suppose that
$M_1 \cdots M_{s-1} \, M_s \, M_s' = 0$ in $N^{s+1}(X)$.
Then there is exactly one nonzero real number $b$ such that
$M_1 \cdots M_{s-1} \, (M_s + bM_s') = 0$ in $N^{s}(X)$.
\item[(5)]
Suppose that $h : X \to X$ is a surjective endomorphism
such that
$$h^*(M_1 \cdots M_{s-1} \, M) \ = \ \lambda(M) \, (M_1 \cdots M_{s-1} \, M)$$
for both $M = M_s$ and $M_s'$
with real numbers $\lambda(M_s) \ne \lambda(M_s')$. Then
we have $M_1 \cdots M_{s-1} \, M_s \, M_s' \ \ne \ 0$ in $N^{s+1}(X)$.

\end{itemize}
\end{lemma}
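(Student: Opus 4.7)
The plan is to handle the five parts sequentially: (1) and (2) set up the positivity and Hodge--Riemann framework, (3) is a key double-limit calculation, (4) will be the main obstacle and extracts a proportionality from the vanishing hypothesis, and (5) is a short contradiction argument using (4) and the eigenvalue condition.

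For (1), I would induct on $s$. The base case is $M_1 \in \overline{\SK}(X) = \overline{P^1(X)}$. Inductively, assuming $M_1 \cdots M_{s-1} \in \overline{P^{s-1}(X)}$, the wedge of a positive $(s{-}1, s{-}1)$-form with a K\"ahler $(1,1)$-form is positive (see \cite[\S A1]{NZ}); together with K\"ahler approximation of $L_s(j) \in \overline{\SK}(X)$, this gives $M_1 \cdots M_{s-1} L_s(j) \in \overline{P^s(X)}$, and the limit $M_1 \cdots M_s$ inherits the same. The claim about $(M_1 \cdots M_s)(M_{i_1} \cdots M_{i_t})$ follows by iterating this wedging argument; the strict positivity $M_1 \cdots M_s L_{s+1} \cdots L_n > 0$ for K\"ahler $L_i$ follows because the nonzero class $M_1 \cdots M_s \in H^{s,s}(X, \BRR)$ pairs strictly positively against top K\"ahler products. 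For (2), I would iteratively rewrite $M_1 \cdots M_r = \lim_{j_r} M_1 \cdots M_{r-1} L_r(j_r)$ and K\"ahler-approximate each $L_r(j_r)$, so that $b(x,y) := -q(x,y)$ becomes a multi-limit of analogous forms whose $(n-2)$ factors are all K\"ahler; at each finite stage the classical Hodge--Riemann semi-negativity on primitives (\cite[Corollary 3.5]{DS04}) applies, and passing to the limit yields (2).

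For (3), the crucial trick is to eliminate $M_s$ via the limit description twice: starting from $M_1 \cdots M_{s-1} L_s \cdots L_{n-2} M_s^2 = (M_1 \cdots M_s) L_s \cdots L_{n-2} M_s$, substitute $M_1 \cdots M_s = \lim_k M_1 \cdots M_{s-1} L_s(k)$, and then inside each term substitute $M_1 \cdots M_{s-1} M_s = \lim_{k'} M_1 \cdots M_{s-1} L_s(k')$, obtaining a double limit
\[ \lim_{k,k'} M_1 \cdots M_{s-1} L_s(k) L_s(k') L_s \cdots L_{n-2}. \]
Each such term is $\ge 0$ by (1) applied to the quasi-nef sequence $M_1, \ldots, M_{s-1}, L_s(k)$ together with the additional nef factors, so the limit $-q(M_s, M_s) \ge 0$.

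For (4), the main obstacle, I would combine (1)--(3) to exploit a Lorentzian structure for $b := -q$. By (2) the positive index of $b$ on $H^{1,1}(X, \BRR)$ is $\le 1$, and by (1) it is exactly $1$ (the K\"ahler direction $L_{n-1}$ gives $b(L_{n-1}, L_{n-1}) > 0$); the hypothesis yields $b(M_s, M_s') = 0$ after pairing with $L_s \cdots L_{n-2}$; (3) gives $b(M_s, M_s), b(M_s', M_s') \ge 0$; and (1) gives $b(M_s, L_{n-1}), b(M_s', L_{n-1}) > 0$. On $\operatorname{span}(M_s, M_s')$ the restriction of $b$ is a diagonal matrix with non-negative entries (since $b(M_s, M_s') = 0$), and the positive-index bound forces at least one diagonal entry to vanish; a Lorentzian analysis in $H^{1,1}(X, \BRR)/\ker b$ --- using that both $M_s, M_s'$ lie in the forward cone (detected by $b(\cdot, L_{n-1}) > 0$) --- then forces both diagonals to vanish and places $M_s, M_s'$ on a common null ray, yielding $M_s - c(\vec L) M_s' \in \ker b$ for some $c(\vec L) > 0$, with $\vec L := (L_s, \ldots, L_{n-2})$. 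Equivalently, $(M_1 \cdots M_s) L_s \cdots L_{n-2} = c(\vec L) (M_1 \cdots M_{s-1} M_s') L_s \cdots L_{n-2}$ in $H^{n-1, n-1}(X, \BRR)$. To promote this to $N^s(X)$, I would check that $c(\vec L)$ is independent of $\vec L$: comparing the identity for $\vec L$ with that for a tuple $\vec L'$ differing from $\vec L$ in a single slot (say $L_s \leftrightarrow L_s'$), and pairing both with one extra K\"ahler class in a common ordering, the two left-hand sides coincide (same top intersection on $X$) while the right-hand sides are positive multiples of the same (strictly positive by (1)) top intersection, forcing $c(\vec L) = c(\vec L')$; iterating over slots, $c$ is a single positive constant $c_0$. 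Since K\"ahler classes span $H^{1,1}(X, \BRR)$, $M_1 \cdots M_s \equiv c_0 \cdot M_1 \cdots M_{s-1} M_s'$ in $N^s(X)$, giving (4) with $b = -c_0 \ne 0$; uniqueness follows from $M_1 \cdots M_{s-1} M_s' \ne 0$ in $N^s(X)$ (again by (1)). Finally, (5) is immediate by contradiction: if $M_1 \cdots M_{s-1} M_s M_s' = 0$ in $N^{s+1}(X)$, (4) provides a nonzero $b$ with $M_1 \cdots M_s = -b \cdot M_1 \cdots M_{s-1} M_s'$ in $N^s(X)$, and applying $h^*$ together with the eigenvalue hypothesis yields $(\lambda(M_s) - \lambda(M_s'))(M_1 \cdots M_s) = 0$ in $N^s(X)$, contradicting $\lambda(M_s) \ne \lambda(M_s')$ and (1).
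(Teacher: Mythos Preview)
Your argument is correct and follows the same Hodge--Riemann/Lorentzian strategy as the paper, which is itself a direct adaptation of \cite[Corollary 3.5, Lemma 4.4]{DS04}. The paper's write-up is much terser: for (2) it cites \cite[Corollary 3.4]{DS04}, and for (4)--(5) it simply invokes \cite{DS04} while noting that (2), (3) and the fact that $h^*$ preserves cohomology and the K\"ahler cone supply the needed inputs; your detailed Lorentzian analysis for (4) and the eigenvalue contradiction for (5) are a faithful unpacking of those citations.

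Two organizational points are worth noting. First, your treatment of (1) is vague on the iterated product $(M_1\cdots M_s)(M_{i_1}\cdots M_{i_t})$: since $M_{i_j}$ need not be nef, one cannot simply ``wedge'' it on. The paper's clean device is to induct on $t$ and, for the new factor $M_{i_{t+1}}=M_k$, use commutativity to swap it with the copy of $M_k$ already present in $M_1\cdots M_s$, replacing the latter by its nef approximants $L_k(j)$; this reduces to the inductive hypothesis cupped with a nef class. Second, your double-limit proof of (3) is precisely this swap trick specialized to $t=1$, $i_1=s$; the paper instead records (3) as a one-line consequence of (1) (take $t=1$, $i_1=s$, then cup with the K\"ahler classes $L_s,\dots,L_{n-2}$). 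Consolidating the swap argument into (1), as the paper does, both tightens your (1) and makes your separate calculation in (3) unnecessary.
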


\begin{proof}
(1)We only need to prove the first part and for this we
proceed by induction on $t$. The case $t = 0$ is clear.
Suppose that we are done with the case $t$. Then for the case $t+1$ (with $i_{t+1} = k \le s$ say)
the assertion (1) follows from:
$$\begin{aligned}
(M_1 \cdots M_s) \, (M_{i_1} \cdots M_{i_{t+1}}) \ = \
&\lim_{j \to \infty} ((M_1 \cdots M_{k-1} \, L_k(j) \, M_{k+1} \cdots M_s) \,
(M_{i_1} \cdots M_{i_{t+1}})) \\
\ = \ &\lim_{j \to \infty} ((M_1 \cdots M_s) \,(M_{i_1} \cdots M_{i_{t}} \, L_k(j))).
\end{aligned}$$

The assertion (2) follows from the Hodge-Riemann Theorem of Gromov; see its
other variation \cite[Corollary 3.4]{DS04}. (3) is from (1).
The assertions (4) and (5) are actually proved in \cite[Corollary 3.5, Lemma 4.4]{DS04},
but we use (2) and (3) (see also \cite[Remark 3.6]{DS04})
and note that $h^*$ induces an automorphism of $H^{i,i}(X, \BRR)$ ($i \ge 0$),
acts as a scalar multiple by $\deg(h)$ on $H^{2n}(X, \BRR)$
(and also preserves the K\"ahler cone of $X$).
\end{proof}

\begin{definition}
Let $V$ be a free $\BZZ$-module of finite rank or a real vector space of finite dimension.
Let $V_{\BCC}$ be its complexification.
A subgroup $H \le \GL(V)$ is {\it Z-connected on} $V$ or {\it on} $V_{\BCC}$, if the Zariski-closure
in $\GL(V_{\BCC})$ of (the natural extension) $H | V_{\BCC}$
is connected with respect to the Zariski topology.

Let $X$ be a compact K\"ahler manifold. For a subgroup $G \le \Aut(X)$,
we consider its action (by pullback) $G|\Lambda$
on some $G$-stable subspace $\Lambda \subset H^{i,i}(X, \BRR)$.
We say that $G$ is {\it Z-connected} on $\Lambda$ if
$G | \Lambda$ is Z-connected on $\Lambda$;
our $G$ is {\it solvable on} $\Lambda$ if $G | \Lambda$
is solvable.
\end{definition}

\begin{remark}\label{Zcr1}
By the Tits alternative theorem \cite[Theorem 1]{Ti},
either $G | \Lambda_{\BCC}$ and hence $G$ contain a subgroup isomorphic
to $\BZZ * \BZZ$, or
$G|\Lambda_{\BCC}$ contains a solvable subgroup $H$ of finite index.
In the latter case, the preimage $G_1 \le G$ of
the identity component of the closure
of $H$ in $\GL(\Lambda_{\BCC})$,
is of finite-index in $G$, and is solvable and Z-connected on $\Lambda$.
\end{remark}

We shall use the theorem of Lie-Kolchin type for a cone in \cite{KOZ}:

\begin{theorem}\label{LK} {\rm (cf. \cite[Theorem 1.1]{KOZ})}
Let $V$ be a finite-dimensional real vector space and $\{0\} \ne C \subset V$ a strictly convex
closed cone. Suppose that a solvable subgroup $G \le \GL(V)$ is
Z-connected on $V_{\BCC}$ and $G(C) \subseteq C$.
Then $G$ has a common eigenvector in the cone $C$.
\end{theorem}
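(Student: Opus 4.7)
The plan is to reduce, via the structure theory of connected solvable algebraic groups, to an abelian action on an invariant subspace and then to invoke a Brouwer / Perron--Frobenius argument on a minimal invariant sub-cone. First, let $\overline{G} \le \GL(V_{\BCC})$ denote the Zariski closure of $G$. By hypothesis $\overline{G}$ is connected, and since solvability is cut out by polynomial identities in the iterated commutators, $\overline{G}$ is solvable. Borel's structure theorem for connected solvable affine algebraic groups then gives that $\overline{H} := [\overline{G}, \overline{G}]$ is contained in the unipotent radical of $\overline{G}$; in particular $\overline{H}$ is a connected nilpotent unipotent algebraic subgroup, and every element of $H := [G, G]$ acts unipotently on $V$.

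The key technical lemma is then to produce a non-zero common fixed vector of $H$ inside $C$, i.e.\ $V^H \cap C \ne \{0\}$ where $V^H := \{v \in V : h v = v \text{ for all } h \in H\}$. I would prove this in three stages. First, a single unipotent $u$ preserving $C$ has a fixed vector in $C$: fix $\ell$ in the interior of the dual cone of $C$ (non-empty because $C$ is strictly convex and closed), let $B := \{v \in C : \ell(v) = 1\}$ (a compact convex cross-section), and apply Brouwer's theorem to the continuous self-map $v \mapsto u v / \ell(u v)$ of $B$; the resulting eigenvalue must be $1$ since $u$ is unipotent. Second, for a commuting family of such unipotents one iterates, successively passing to strictly convex closed sub-cones $V^{u_1} \cap C$, $V^{u_1, u_2} \cap C$, and so on. Third, for the connected nilpotent group $\overline{H}$ one inducts on its nilpotency class: its center $Z := Z(\overline{H})$ is non-trivial and is a commutative unipotent algebraic group (connected in characteristic zero), so by the previous stage $V^Z$ meets $C$ in a non-zero strictly convex closed sub-cone, on which $\overline{H}/Z$ acts unipotently with strictly smaller nilpotency class.

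Next, set $W := V^H$ and $C_W := W \cap C$. Since $H$ is normal in $G$, the pair $(W, C_W)$ is $G$-stable, and the induced action of $G$ on $W$ factors through the abelian quotient $G / H$; Z-connectedness is inherited because the image of the connected $\overline{G}$ in $\GL(W_{\BCC})$ is connected. Order the family $\Sigma$ of non-zero $G$-invariant closed convex sub-cones of $C_W$ by reverse inclusion; using the compact cross-section $\{v \in C_W : \ell(v) = 1\}$, decreasing chains in $\Sigma$ have non-empty intersection by the finite intersection property, so Zorn yields a minimal element $C_0 \in \Sigma$. For each $g \in G$, the first stage above (now without the unipotent constraint) gives an eigenvector of $g$ in $C_0$ with positive eigenvalue $\lambda(g)$; the eigencone $\{v \in C_0 : g v = \lambda(g) v\}$ is a non-zero $G$-invariant (using commutativity of $G | W$) closed convex sub-cone of $C_0$, so by minimality it equals $C_0$. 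Hence every $g \in G$ acts on $C_0$ as the scalar $\lambda(g)$, and any non-zero vector of $C_0$ is a common eigenvector of $G$ in $C$.

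The subtlest point is the nilpotent induction in the third stage above: at each round one must verify that the center $Z$ is non-trivial, connected, and unipotent, and that the new fixed subspace continues to meet $C$ in a strictly convex closed cone on which Brouwer can be re-applied. The Z-connectedness of $G$ is essential precisely here, since it guarantees that $\overline{G}$, and hence $\overline{H}$, is connected, so that the nilpotent induction has a non-trivial center at every non-terminal stage.
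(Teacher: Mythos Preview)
The paper does not give a proof of this theorem; it is quoted verbatim from \cite[Theorem~1.1]{KOZ} and used as a black box. So there is no proof in the present paper to compare against, and I can only assess your argument on its own merits.

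Your outline is essentially correct and is in the spirit of the Lie--Kolchin proof: pass to the unipotent part via $[\overline{G},\overline{G}]$, produce a common fixed ray for it inside $C$ by a Brouwer/Perron--Frobenius argument, and then handle the residual abelian action by a minimality (Zorn) argument on invariant sub-cones. The Brouwer step on a compact cross-section, the iteration for commuting unipotents, and the final minimal-cone argument are all fine as written.

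There is one genuine, though easily repaired, gap in your third stage. You run the nilpotent induction on the algebraic group $\overline{H}=[\overline{G},\overline{G}]$ and invoke stage~2 for its center $Z(\overline{H})$. But stage~2 needs the operators to preserve $C$, and you only know that elements of $G$ (hence of $H=[G,G]$) do so; an element of $Z(\overline{H})$ need not lie in $G$, need not be a real operator on $V$, and need not send $C$ into itself. The fix is to run the induction on the abstract group $H$ instead: since $H\le\overline{H}$ and $\overline{H}$ is nilpotent, $H$ is nilpotent as an abstract group, every element of $H$ is unipotent, and $H$ does preserve $C$. Then $Z(H)$ is non-trivial whenever $H$ is, stage~2 applies to $Z(H)$, and $H/Z(H)$ acts on $V^{Z(H)}\cap C$ with strictly smaller nilpotency class. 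This also sharpens your closing remark: Z-connectedness is used exactly to make $\overline{G}$ connected, so that $[\overline{G},\overline{G}]$ is unipotent and hence $H$ consists of unipotent operators and is nilpotent; the induction itself needs no algebraic-group structure once that is in hand.
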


\begin{setup}
{\bf Proof of Theorems \ref{ThA} and \ref{ThB}}
\end{setup}

By Remark \ref{Zcr1}, we only need to show Theorem \ref{ThB}.
From now on till Lemma \ref{algdim}, we assume that $G|H^{1,1}(X)$
{\it is solvable and Z-connected}.

Applying Theorem \ref{LK} to the cone $\overline{\SK}(X) \subset H^{1,1}(X, \BRR)$,
there is a common eigenvector $M_1 \in \overline{\SK}(X)$ of $G$.
We may write $g^*M_1 = \chi_1(g) M_1$ where $\chi_1: G \to \BRR_{> 0}$ is a character.
Consider the induced action of $G$ on the finite-dimensional subspace
$M_1 . H^{1,1}(X, \BRR)$ ($\subseteq H^{2,2}(X, \BRR)$),
which is nonzero by Lemma \ref{pre} (1).
Our $M_1 . \overline{\SK}(X)$ is a cone and spans $M_1 . H^{1,1}(X, \BRR)$
but $(*)$: it may not be {\it closed} in $M_1 . H^{1,1}(X, \BRR)$.
Now $G$ is solvable and Z-connected on $M_1 . H^{1,1}(X, \BRR)$,
because $G | (M_1 . H^{1,1}(X, \BCC))$ is the image of $G | H^{1,1}(X, \BCC)$
(multiplied by $\chi_1$).
Applying Theorem \ref{LK} to the closure of $M_1 . \overline{\SK}(X)$ in $M_1 . H^{1,1}(X, \BRR)$,
we get $M_2 \in H^{1,1}(X, \BRR)$ such that $M_1 \, M_2$ is a common eigenvector of $G$ and
\ref{Ms} holds for $s = 2$. Notice that $M_2$ {\it may not be nef} because of the $(*)$ above.
We may write
$g^*(M_1 \, M_2) = \chi_1(g) \chi_2(g) (M_1 \, M_2)$
where $\chi_2 : G \to \BRR_{> 0}$ is a character.
Inductively, we obtain $M_1, \dots, M_n$
such that for every $1 \le r \le n-1$ our
$G$ is solvable and Z-connected on $M_1 \cdots M_r . H^{1,1}(X, \BRR)$,
that \ref{Ms} holds for $s = n$
and that for all $1 \le t \le n$
$$g^*(M_1 \cdots M_t) = \chi_1(g) \cdots \chi_t(g) \, (M_1 \cdots M_t)$$
where $\chi_i : G \to \BRR_{> 0}$ ($1 \le i \le n$) are characters.
Of course, $\chi_1(g) \cdots \chi_n(g) = \deg(g) = 1$.

Define a homomorphism into the additive group as follows
$$\psi : G \to \BRR^{n-1}, \hskip 2pc g \mapsto (\log \chi_1(g), \, \dots, \, \log \chi_{n-1}(g)).$$
\par \noindent
Now the first part of Theorem \ref{ThB} follows from the two claims below.

\begin{claim}\label{pfc1}
$\Ker(\psi) = N(G)$.
\end{claim}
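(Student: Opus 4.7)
The plan is to establish the two inclusions $N(G)\subseteq \Ker(\psi)$ and $\Ker(\psi)\subseteq N(G)$ separately, in both directions exploiting the quasi nef sequence $M_1,\dots,M_n$ constructed just above, so that $\chi_1(g)\cdots\chi_s(g)$ is the eigenvalue of $g^*$ on the nonzero class $M_1\cdots M_s\in H^{s,s}(X,\BRR)$.

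For $N(G)\subseteq \Ker(\psi)$, take $g\in N(G)$, so $h(g)=h(g^{-1})=0$; the log-concavity of dynamical degrees together with the Poincar\'e-duality relation $d_i(g)=d_{n-i}(g^{-1})$ forces $d_i(g)=d_i(g^{-1})=1$ for every $0\le i\le n$. Hence the spectral radii of both $g^*$ and $(g^*)^{-1}$ on each $H^{i,i}(X,\BRR)$ equal $1$, so every eigenvalue of $g^*$ on $H^{i,i}(X,\BRR)$ has modulus exactly $1$. In particular the positive real number $\chi_1(g)\cdots\chi_i(g)$, being the eigenvalue of $g^*$ on the nonzero class $M_1\cdots M_i$, must equal $1$ for every $i$. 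Taking ratios yields $\chi_i(g)=1$ for $i=1,\dots,n-1$, i.e.\ $\psi(g)=0$.

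For $\Ker(\psi)\subseteq N(G)$, suppose $\psi(g)=0$ but $d_1(g)>1$, seeking a contradiction. Since $g^*$ preserves the strictly convex closed cone $\overline{\SK}(X)$ of nef classes, the Perron-Frobenius theorem for cones supplies a nonzero $L\in\overline{\SK}(X)$ with $g^*L=d_1(g)\,L$. I then insert $L$ as an extra factor in the flag by iterating Lemma \ref{pre}(5) for $s=1,2,\dots,n-1$, each time with the given $M_s$ and $M_s':=L$ (for which the constant sequence $L_s(j)':=L\in\overline{\SK}(X)$ certifies the required limit representation). At every stage the two eigenvalues are
$$
\lambda(M_s)=\chi_1(g)\cdots\chi_s(g)=1,\qquad \lambda(L)=\chi_1(g)\cdots\chi_{s-1}(g)\cdot d_1(g)=d_1(g)\neq 1,
$$
so Lemma \ref{pre}(5) delivers $M_1\cdots M_s\,L\neq 0$ in $N^{s+1}(X)$, which is precisely the nonvanishing hypothesis that fuels the next stage. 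At $s=n-1$ we obtain $M_1\cdots M_{n-1}\,L\neq 0$ in the one-dimensional space $N^n(X)=H^{n,n}(X,\BRR)$. But $g^*$ acts on $H^{n,n}(X,\BRR)$ by $\deg(g)=1$, and on this same nonzero class also by $d_1(g)$, forcing $d_1(g)=1$ and contradicting the assumption.

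The most delicate point will be the careful bookkeeping of the induction so that the hypotheses of Lemma \ref{pre}(5) truly propagate from stage $s-1$ to stage $s$: the constant sequence $L_s(j)'=L$ uniformly handles the limit condition on $M_s'$; the nonvanishing of $M_1\cdots M_{s-1}\,L$ needed to apply the lemma at stage $s$ is exactly the conclusion of stage $s-1$; and the eigenvalue comparison $\lambda(M_s)\ne\lambda(L)$ is transparent because $\chi_i(g)=1$ for all $i\le n-1$ while $d_1(g)\neq 1$. Once Claim \ref{pfc1} is in hand, normality of $N(G)$ in $G$ and the embedding of $G/N(G)$ into $\BRR^{n-1}$ follow formally from $\psi$ being a homomorphism into an abelian group.
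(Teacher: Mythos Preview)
Your proof is correct and follows essentially the same approach as the paper. The only cosmetic differences are that you spell out the easy inclusion $N(G)\subseteq\Ker(\psi)$ via log-concavity of the $d_i$ (the paper just cites the spectral characterization of entropy), and you organize the hard inclusion as a forward induction showing $M_1\cdots M_s\,L\neq 0$ up to $s=n-1$, whereas the paper first observes $M_1\cdots M_{n-1}\,L_1=0$ and then takes the minimal $t$ with $M_1\cdots M_t\,L_1=0$; both routes invoke Lemma~\ref{pre}(5) at the same critical step.
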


\begin{proof}
By the equivalent definition of the entropy $h(g)$ in the introduction,
we have $N(G) \subseteq \Ker(\psi)$.
Suppose the contrary that $g \in \Ker(\psi)$ is of positive entropy.
Then $\chi_i(g) = 1$ for all $1 \le i \le n$, but
the first dynamical degree $d_1(g) > 1$. It is known that
$d_1(g) = \max \{|\lambda| \, ; \, \lambda$ is an eigenvalue of $g^* | H^{1,1}(X, \BRR) \}$.
Applying the generalized Perron-Frobenious theorem in \cite{Bi} to the cone $\overline{\SK}(X)$
which spans $H^{1,1}(X, \BRR)$, we find a nonzero $L_1$ in
$\overline{\SK}(X)$ such that $g^*L_1 = \lambda_1 L_1$
with $\lambda_1 = d_1(g) > 1$.
Now
$$M_1 \cdots M_{n-1} \, L_1 = g^*(M_1 \cdots M_{n-1} \, L_1) =
\lambda_1 \, (M_1 \cdots M_{n-1} \, L_1).$$
Since $\lambda_1 \ne 1$, we have $M_1 \cdots M_{n-1} \, L_1 = 0$.
If $M_1 \, L_1 = 0$ in $N^2(X)$ then $M_1$ and $L_1$ are parallel in $N^1(X)$
by \cite[Corollary 3.2]{DS04}
or Lemma \ref{pre} (5). Hence $1 = \chi_1(g) = \lambda_1 > 1$, a contradiction.
Take $t \ge 2$ minimal such that $M_1 \cdots M_t \, L_1 = 0$ in $N^{t+1}(X)$.
Then both $M_1 \cdots M_{t-1} \, L_1$ and $M_1 \cdots M_{t-1} \, M_t$
are nonzero in $N^t(X)$; see Lemma \ref{pre} (1).
Since $L_1$ is nef, we can apply Lemma \ref{pre} (5)
to $M_t' := L_1$ and conclude that $M_1 \cdots M_t \, L_1 \ne 0$ in $N^{t+1}(X)$,
since $g^*(M_1 \cdots M_t) = M_1 \cdots M_t$ while $g^*(M_1 \cdots M_{t-1} \, L_1) =
\lambda_1 \, (M_1 \cdots M_{t-1} \, L_1)$ with $\lambda_1 \ne 1$.
We have reached a contradiction.
This proves Claim \ref{pfc1}.
\end{proof}

\begin{claim}\label{pfc2}
$\Imm(\psi)$ is discrete in $\BRR^{n-1}$ with respect to the Euclidean topology.
\end{claim}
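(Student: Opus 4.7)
The plan is a proof by contradiction closely parallel to that of Claim~\ref{pfc1}. Suppose $\Imm(\psi)$ is not discrete; then there exist $g_k \in G$ with $\psi(g_k) \ne 0$ and $\psi(g_k) \to 0$. This translates to $\chi_i(g_k) \to 1$ for $1 \le i \le n-1$, and since $\prod_{i=1}^n \chi_i = \deg = 1$, also $\chi_n(g_k) \to 1$; in particular each partial product $\alpha_t(g_k) := \chi_1(g_k)\cdots\chi_t(g_k) \to 1$. By Claim~\ref{pfc1}, the hypothesis $\psi(g_k)\ne 0$ forces $d_1(g_k) > 1$ for every $k$.

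The crux of the argument is to show that nonetheless $d_1(g_k) \to 1$. Apply the generalized Perron--Frobenius theorem of \cite{Bi} to the cone $\overline{\SK}(X)$ to obtain, for each $k$, a nonzero $L_k \in \overline{\SK}(X)$ with $g_k^* L_k = d_1(g_k)\,L_k$, and consider the chain $L_k,\ M_1 L_k,\ M_1 M_2 L_k,\dots,M_1\cdots M_{n-1} L_k$. Either (A) $M_1\cdots M_{n-1}\cdot L_k \ne 0$ in $N^n(X) \cong \BRR$, and then since $g_k^*$ acts as the identity on top cohomology the two expressions for the eigenvalue yield $\alpha_{n-1}(g_k)\,d_1(g_k) = 1$, so $d_1(g_k) = \chi_n(g_k) \to 1$; or (B) there is a minimal $t \in \{1,\dots,n-1\}$ with $M_1\cdots M_t\cdot L_k = 0$ in $N^{t+1}(X)$, whence by minimality $M_1\cdots M_{t-1}\cdot L_k \ne 0$ in $N^t(X)$. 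In case (B), Lemma~\ref{pre}(5) applied with $s = t$, $M_s = M_t$, and $M_s' = L_k$ (using the trivial limit sequence $L_t(j)' \equiv L_k \in \overline{\SK}(X)$) forces the two eigenvalues $\alpha_t(g_k)$ and $\alpha_{t-1}(g_k)\,d_1(g_k)$ of $g_k^*$ to coincide: otherwise the lemma would give $M_1\cdots M_t\cdot L_k \ne 0$ in $N^{t+1}(X)$, contradicting the choice of $t$. Hence $\chi_t(g_k) = d_1(g_k)$ and so $d_1(g_k) \to 1$ in this case as well.

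With $d_1(g_k)$ uniformly bounded (converging to $1$), the contradiction follows from integrality. Each $\alpha_t(g)$ is the eigenvalue of the integer matrix $g^*\,|\,H^{2t}(X,\BZZ)/\mathrm{torsion}$ on the eigenvector $M_1\cdots M_t$, so $\alpha_t(g)$ is an algebraic integer of degree at most $b_{2t}(X) := \dim H^{2t}(X,\BCC)$; applying the same to $g^{-1}$ makes it an algebraic unit. Any Galois conjugate of $\alpha_t(g_k)$ is another eigenvalue of the same integer matrix, hence bounded in absolute value by $d_t(g_k) \le d_1(g_k)^t$ (log-concavity of dynamical degrees); since $d_1(g_k) \to 1$, these bounds are uniform in $k$. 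The classical finiteness principle---for fixed $d$ and $M$, the set $\{\alpha \in \overline{\BZZ} : \deg\alpha \le d,\ \max_\sigma|\sigma(\alpha)| \le M\}$ is finite---then confines each $\alpha_t(g_k)$ to a finite set. The convergence $\alpha_t(g_k) \to 1$ therefore forces $\alpha_t(g_k) = 1$ for $k\gg 0$, whence $\chi_i(g_k) = \alpha_i(g_k)/\alpha_{i-1}(g_k) = 1$ for all $i$ and all $k$ sufficiently large, giving $\psi(g_k) = 0$---the desired contradiction.

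The main obstacle is case (B) of the middle step: identifying $d_1(g_k)$ with one of the $\chi_t(g_k)$, even though the $\chi_i$ are only a specific finite subset of the spectrum of $g_k^*\,|\,H^{1,1}(X,\BCC)$. This identification hinges on the Hodge--Riemann-type content of Lemma~\ref{pre}(5), used exactly as in the proof of Claim~\ref{pfc1}; once it is secured, the concluding integrality-plus-finiteness step is routine.
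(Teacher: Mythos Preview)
Your argument follows essentially the same route as the paper: locate a Perron--Frobenius eigenvector $L$ for $g$ in $\overline{\SK}(X)$, use Lemma~\ref{pre}(5) at the minimal vanishing level $t$ to identify $d_1(g)$ with one of the characters $\chi_t(g)$, and then conclude by an integrality/finiteness argument. The packaging differs slightly: the paper introduces a \emph{second} nef eigenvector $L_2$ (for the eigenvalue $d_1(g^{-1})^{-1}$), uses it to rule out your Case~(A), proves the auxiliary inequality $d_1(g^{-1}) \le d_1(g)^{n-1}$, and then bounds both $d_1(g^{\pm})$ before invoking \cite[Corollary~2.2]{DS04} on $H^2(X,\BZZ)$. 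Your direct Case~(A)/(B) dichotomy is arguably cleaner, since it dispenses with $L_2$ and the switch $g \leftrightarrow g^{-1}$.

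There is one soft spot. You assert that every Galois conjugate of $\alpha_t(g_k)$ is bounded by $d_t(g_k)$, i.e.\ that the spectral radius of $g_k^*$ on $H^{2t}(X,\BCC)$ is $d_t(g_k)$. But $d_t$ is by definition only the spectral radius on the Hodge summand $H^{t,t}(X,\BRR)$; that it coincides with the spectral radius on all of $H^{2t}(X,\BCC)$ is a separate (true, but nontrivial) fact that you have not justified. You do not actually need it: once $d_1(g_k) \to 1$, the identity $h(g) = \max_i \log d_i(g)$ quoted in the introduction, combined with $d_i \le d_1^i$ from log-concavity, already bounds the spectral radius of $g_k^*$ on \emph{every} $H^j(X,\BCC)$ by $d_1(g_k)^{n-1}$, which suffices for the Northcott-type finiteness you invoke. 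Alternatively, rerun your Case~(A)/(B) dichotomy for $g_k^{-1}$ to obtain $d_1(g_k^{-1}) \to 1$ as well, and then carry out the finiteness step on $H^2(X,\BZZ)$ alone, as the paper does.
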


\begin{proof}
Since $\psi$ is a homomorphism, it suffices to show that $(0, \dots, 0)$
is an isolated point of $\psi(G)$.
Let $\varepsilon > 0$. Consider the set $\Sigma = \Sigma_{\varepsilon}$
of all elements $g \in G$
satisfying $|\log \chi_i(g)| \le \varepsilon$ for all $1 \le i \le n-1$,
or equivalently $\chi_i(g^{\pm}) \le \delta := e^{\varepsilon}$.
Note that $g \in \Sigma$ if and only if $g^{-1} \in \Sigma$.
Let $g \in \Sigma$ with $\psi(g) \ne (0, \dots, 0)$. Then $g$ is of positive entropy
by Claim \ref{pfc1}. Thus there are nonzero $L_i$ in the cone $\overline{\SK}(X)$
such that $g^*L_i = \lambda_i L_i$ with $\lambda_1 = d_1(g) > 1$
and $\lambda_2^{-1} = d_1(g^{-1}) > 1$.

We now prove the assertion $(**)$: $d_1(g^{\pm}) \le \delta^{n-1}$.
It suffices to show $d_1(g^e) \le \delta$ for $e = 1$ or $-1$, because
$d_1(g^{\pm}) \le d_1(g^{\mp})^{n-1}$.
Indeed, applying the argument above to $\langle g \rangle$,
we get $M_1', \cdots, M_n'$ satisfying \ref{Ms} so that
each $M_1' \cdots M_t'$ is semi $g$-invariant, and we may take $M_1' = L_1, M_2' = L_2$
(for $L_1 \, L_2 \ne 0$ in $N^2(X)$ by Lemma \ref{pre} (5));
now $M_1' \cdots M_n' = g^*(M_1' \cdots M_n') = \lambda_1 \lambda_2 \cdots (M_1' \cdots M_n')$
implies that $1 = \lambda_1 \lambda_2 \cdots \le \lambda_1^{n-1} \lambda_2$
and hence $d_1(g^{-1}) = \lambda_2^{-1} \le d_1(g)^{n-1}$.
For the proof of $d_1(g^e) \le \delta$, notice
$$M_1 \cdots M_{n-1} \, L_i = g^*(M_1 \cdots M_{n-1} \, L_i) =
\chi_1(g) \cdots \chi_{n-1}(g) \, \lambda_i \,
(M_1 \cdots M_{n-1} \, L_i).$$
If $M_1 \cdots M_{n-1} \, L_i \ne 0$ for both $i = 1$ and $2$,
then $\chi_1(g) \cdots \chi_{n-1}(g) \, \lambda_i = 1$ and hence $1 < \lambda_1 = \lambda_2 < 1$,
absurd. Switching $g$ with $g^{-1}$ if necessary, we may assume that
$M_1 \cdots M_{n-1} \, L_1 = 0$. If $M_1 \, L_1 = 0$ in $N^2(X)$ then $M_1$ is parallel to
$L_1$ in $N^1(X)$ and hence $d_1(g) = \lambda_1 = \chi_1(g) \le \delta$.
If $M_1 \, L_1 \ne 0$ in $N^2(X)$ and $t \ge 2$ is minimal such that
$M_1 \cdots M_t \, L_1 = 0$ in $N^{t+1}(X)$, then
$d_1(g) = \chi_t(g) \le \delta$ by Lemma \ref{pre} (5);
see the proof of Claim \ref{pfc1}. The assertion $(**)$ is proved.

Therefore, for every $g \in \Sigma$,
the absolute values of all (complex) eigenvalues
of $g^{*}|H^2(X, \BZZ)$ are bounded by $\delta^{n-1}$,
whence Claim \ref{pfc2} holds;
see the proof of \cite[Corollary 2.2]{DS04}.
This also proves the first part of Theorem \ref{ThB}.
\end{proof}

We still need to get a better rank upper bound $r \le n-2$ except for the
four cases in Theorem \ref{ThB}. To distinguish, we also write $r = r(G)$.

\begin{lemma}\label{reduction}
Let $(X, G)$ be as in Theorem $\ref{ThB}$
and $Y$ a compact K\"ahler manifold with a biregular action by $G$ and
with $n = \dim X > k := \dim Y > 0$. Suppose that
$\pi: X \to Y$ is a $G$-equivariant surjective holomorphic map.
Then the rank $r(G) \le n - 2$.
\end{lemma}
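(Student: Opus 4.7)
The plan is to build a quasi nef sequence on $X$ whose first $k$ classes are pullbacks of a quasi nef sequence on $Y$; a Hodge-theoretic determinant relation on $Y$ then forces an extra linear constraint on the characters produced on $X$ and drops the rank of $\psi$ by one.

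I would first apply the construction in the proof of Theorem~\ref{ThB} to the action of $G$ on $Y$. Since $\pi^* \colon H^{1,1}(Y,\BRR) \hookrightarrow H^{1,1}(X,\BRR)$ is injective and $G$-equivariant onto a $G$-stable subspace, the representation $G \,|\, H^{1,1}(Y)$ is the restriction of $G \,|\, H^{1,1}(X)$ to an invariant subspace, and hence inherits solvability and Z-connectedness. Iterating Theorem~\ref{LK} through the quasi nef construction \ref{Ms} on $Y$, one obtains classes $H_1, \ldots, H_k \in H^{1,1}(Y, \BRR)$ and characters $\chi_i^Y \colon G \to \BRR_{>0}$ with
\[
g^*(H_1 \cdots H_r) \ = \ \chi_1^Y(g) \cdots \chi_r^Y(g)\,(H_1 \cdots H_r) \qquad (1 \le r \le k).
\]
Since $\dim_{\BCC} Y = k$, the Hodge decomposition gives $H^{k,k}(Y,\BRR) = H^{2k}(Y,\BRR) \cong \BRR$ (the only $(p,q)$-summand with $p+q=2k$ is $H^{k,k}$), and every holomorphic automorphism of $Y$ acts on top cohomology by $\deg = 1$; thus $\chi_1^Y(g) \cdots \chi_k^Y(g) = 1$ for every $g \in G$.

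Next, set $M_i := \pi^* H_i$ for $1 \le i \le k$ and extend to a full quasi nef sequence $M_1, \ldots, M_n$ on $X$ by continuing the construction of Theorem~\ref{ThB}'s proof, applying Theorem~\ref{LK} to the closures of the cones $M_1 \cdots M_r \cdot \overline{\SK}(X)$ for $r = k, k+1, \ldots, n-1$. Since $\pi^*$ sends $\overline{\SK}(Y)$ into $\overline{\SK}(X)$, condition (i) of \ref{Ms} is immediate, and the defining limits $H_1 \cdots H_r = \lim_j(H_1 \cdots H_{r-1}\,L_r^Y(j))$ with $L_r^Y(j) \in \overline{\SK}(Y)$ pull back to $M_1 \cdots M_r = \lim_j(M_1 \cdots M_{r-1}\,\pi^* L_r^Y(j))$, verifying (ii). The chief obstacle is to show that $M_1 \cdots M_r \ne 0$ in $N^r(X)$ for every $1 \le r \le k$; for this I would use the projection formula
\[
\pi^*(H_1 \cdots H_r) \cdot L^{n-r} \ = \ (H_1 \cdots H_r) \cdot \pi_*(L^{n-r})
\]
for a K\"ahler class $L$ on $X$, together with the fact that $\pi_*(L^{n-r})$ (fiber integration of $L^{n-r}$ along fibers of dimension $n-k$) is represented by a strictly positive $(k-r, k-r)$-form on a dense open subset of $Y$, and that $H_1 \cdots H_r \in \overline{P^r(Y)} \setminus \{0\}$ by Lemma~\ref{pre}(1), to conclude that the pairing is strictly positive. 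The extension from $r = k$ up to $r = n$ is then routine, with nonvanishing at each step controlled by Lemma~\ref{pre}(1).

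By $G$-equivariance of $\pi^*$ we have $g^*(M_1 \cdots M_r) = \chi_1^Y(g) \cdots \chi_r^Y(g)\,(M_1 \cdots M_r)$ for every $r \le k$, so the characters $\chi_1, \ldots, \chi_k$ of the sequence on $X$ agree with $\chi_1^Y, \ldots, \chi_k^Y$, and in particular $\chi_1(g) \cdots \chi_k(g) = 1$ for all $g \in G$. This is a nontrivial linear relation on $(\log \chi_1(g), \ldots, \log \chi_{n-1}(g))$ beyond the defining constraint $\chi_1 \cdots \chi_n = \deg(g) = 1$ already built into $\psi$; hence $\Imm(\psi) \subseteq \BRR^{n-1}$ lies in the hyperplane $\{x_1 + \cdots + x_k = 0\}$, which has dimension $n-2$ since $1 \le k \le n-1$. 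Combining with Claim~\ref{pfc2} and the first part of Theorem~\ref{ThB}, which together identify $\Imm(\psi)$ with the free abelian group $G/N(G)$ of rank $r(G)$, we conclude $r(G) \le n-2$, as required.
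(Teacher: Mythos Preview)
Your argument is correct and follows essentially the same route as the paper: pull back a quasi nef sequence from $Y$, extend it on $X$, and exploit the relation $\chi_1\cdots\chi_k=1$ coming from the top cohomology of $Y$. The only cosmetic difference is in the final step: the paper drops the $k$-th coordinate to define a map $\varphi:G\to\BRR^{n-2}$ and re-checks $\Ker(\varphi)=N(G)$ and discreteness of $\Imm(\varphi)$, whereas you keep the original $\psi$ and observe that its (already discrete) image lies in the hyperplane $\{x_1+\cdots+x_k=0\}$; these are equivalent, and your version is slightly more economical since Claims~\ref{pfc1} and \ref{pfc2} apply verbatim to any sequence satisfying \ref{Ms}.
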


\begin{proof}
We use the argument for the first part of Theorem \ref{ThB}, first
for $(Y, G)$ and then for $(X, G)$.
By the assumption,
both $G | H^{1,1}(X)$ and $G | H^{1,1}(Y)$ are solvable and Z-connected.
By the argument for the first part of Theorem \ref{ThB},
there are $M_1', \dots, M_k' \in H^{1,1}(Y, \BRR) $ satisfying \ref{Ms}
and the following for all $1 \le t \le k$
$$g^*(M_1' \cdots M_t') = \chi_1(g) \cdots \chi_t(g) \, (M_1' \cdots M_t').$$
We can continue the sequence $M_1 := \pi^*M_1', \cdots, M_k := \pi^*M_k'$
with $M_{k+1}, \cdots, M_n$ in $H^{1,1}(X, \BRR)$ so that $M_1, \dots, M_n$
satisfy \ref{Ms} and the following for all $1 \le t \le n$
$$g^*(M_1 \cdots M_t) = \chi_1(g) \cdots \chi_t(g) \, (M_1 \cdots M_t).$$

\par \noindent
Define a homomorphism into the additive group:
$$\varphi : G \to \BRR^{n-2}, \hskip 2pc g \mapsto (\log \chi_1(g), \, \dots, \, \log \chi_{k-1}(g),
\, \log \chi_{k+1}(g), \, \dots, \, \log \chi_{n-1}(g)).$$

As in the first part of
Theorem \ref{ThA}, we only need to show that $\Ker(\varphi) \subseteq N(G)$ and $\Imm(\varphi)$ is discrete.
Notice $(***)$: $\chi_1(g) \cdots \chi_k(g) = 1$ for all $g \in G$.

Suppose that $g \in \Ker(\varphi)$. Then $\chi_i(g) = 1$ holds also for $i = k$ by $(***)$, and hence for all
$1 \le i \le n-1$. Thus $\Ker(\varphi) = N(G)$ by the proof of Claim \ref{pfc1}.

Consider the set $\Sigma = \Sigma_{\varepsilon}$
of all elements $g \in G$
satisfying $|\log \chi_i(g)| \le \varepsilon$ for all $i$ with $i \ne k$ and $1 \le i \le n-1$,
or equivalently $\chi_i(g^{\pm}) \le \delta := e^{\varepsilon}$.
By $(***)$, $\chi_k(g^{\pm}) = (\chi_1(g) \cdots \chi_{k-1}(g))^{\mp} \le \delta^{k-1}$
and hence $\chi_i(g^{\pm}) \le \max \{\delta, \, \delta^{k-1}\}$ for all $g \in \Sigma$
and $1 \le i \le n-1$.
Now the proof of Claim \ref{pfc2} shows that $\varphi$ is discrete.
This proves Lemma \ref{reduction}.
\end{proof}

\begin{lemma}\label{kappa}
Suppose that the Kodaira dimension $\kappa := \kappa(X) > 0$. Then $r(G) \le \max\{0, \, n-1 -\kappa\}$.
This bound is optimal, by considering $(V \times W, \, G \times \id_W)$ with
$(V, G)$ as in \cite[Example 4.5]{DS04} and $W$ any projective manifold of general type.
\end{lemma}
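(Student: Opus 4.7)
The plan is to combine the Iitaka fibration with the construction used in the proof of Theorem \ref{ThB} (first part). The key new ingredient is that the base $Y$ of the Iitaka fibration has dimension $\kappa$ and is of general type, so $\Aut(Y)$ is finite, and a finite-index subgroup of $G$ therefore acts trivially on $\OH^{1,1}(Y, \BRR)$. Pulling back $\kappa$ nef eigenvectors from $Y$ to a $G$-equivariant resolution of $X$ produces $\kappa$ characters $\chi_i$ that are identically $1$, cutting $r(G)$ down by $\kappa$ instead of the single relation coming from Lemma \ref{reduction}.

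If $\kappa = n$ then $X$ is of general type and $\Aut(X)$ is finite, giving $r(G) = 0 = \max\{0, n-1-\kappa\}$. Assume $0 < \kappa < n$. For $m$ sufficiently large and divisible, the pluricanonical map $\phi_m : X \ratmap Y_m$ realizes the Iitaka fibration with $\dim Y_m = \kappa$. Since $G$ acts linearly on $\OH^0(X, mK_X)$ and preserves $Y_m$, a $G$-equivariant Hironaka resolution of the indeterminacy of $\phi_m$ and of the singularities of $Y_m$ produces a smooth compact K\"ahler manifold $X'$ bimeromorphic to $X$, a smooth K\"ahler manifold $Y$ bimeromorphic to $Y_m$ of dimension $\kappa$, and a $G$-equivariant surjective holomorphic map $\pi : X' \to Y$. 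Birational invariance of dynamical degrees under $G$-equivariant conjugation yields $N(G|X) = N(G|X')$ and $r(G|X) = r(G|X')$, so we may work on $X'$.

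Because $Y$ is of general type (the Iitaka model), $\Aut(Y)$ is finite. Let $G_1$ be a finite-index subgroup of $G$ that (a) lies in the kernel of $G \to \Aut(Y)$ and (b) satisfies $G_1 | \OH^{1,1}(X', \BRR)$ solvable and Z-connected (apply Remark \ref{Zcr1} to $G$ acting on $X'$). Then $G_1$ acts trivially on $\OH^{1,1}(Y, \BRR)$. Run the inductive construction in the proof of Theorem \ref{ThB} on $(X', G_1)$ starting from $Y$: pick $M_1', \ldots, M_\kappa' \in \SK(Y)$, so \ref{Ms} holds on $Y$ with each $M_i'$ a $G_1$-eigenvector of trivial character, and set $M_i := \pi^* M_i' \in \overline{\SK}(X')$. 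The product $M_1 \cdots M_\kappa = \pi^*(M_1' \cdots M_\kappa')$ pairs positively with $\omega^{n-\kappa}$ for any K\"ahler class $\omega$ on $X'$ (projection formula plus fiber-by-fiber positivity), so $M_1, \ldots, M_\kappa$ satisfy \ref{Ms} on $X'$; extend to $M_{\kappa+1}, \ldots, M_n$ via the usual Lie-Kolchin induction on $M_1 \cdots M_r . \OH^{1,1}(X', \BRR)$. By construction $\chi_1 \equiv \cdots \equiv \chi_\kappa \equiv 1$ on $G_1$, so the homomorphism $\psi : G_1 \to \BRR^{n-1}$ from Theorem \ref{ThB} lands in $\{0\}^\kappa \times \BRR^{n-1-\kappa}$, and Claims \ref{pfc1} and \ref{pfc2} give $G_1/N(G_1) \cong \Imm(\psi)$ of rank $\le n-1-\kappa$; finally $r(G) = r(G_1)$ since $[G:G_1] < \infty$.

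The main obstacle is justifying that the Iitaka model $Y$ is of general type. In the projective case this follows from finite generation of the canonical ring (BCHM), giving $Y = \SProj \bigoplus_m \OH^0(X, mK_X)$; in the K\"ahler case one appeals to the appropriate form of Iitaka's conjecture $C_{n,m}$ (the general fibers have Kodaira dimension zero). Optimality of the bound is witnessed by the example in the statement: taking $V$ abelian of dimension $k$ with $G \le \Aut(V)$ as in \cite[Example 4.5]{DS04} achieving $r(G) = k-1$, together with $W$ any projective manifold of general type of dimension $\ell$, the product $X = V \times W$ and the group $G \times \id_W$ satisfy $\kappa(X) = \ell$, $\dim X = k+\ell$, so $n - 1 - \kappa = k - 1 = r(G \times \id_W)$.
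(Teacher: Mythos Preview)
Your overall route is sound and genuinely different from the paper's. The paper restricts $G$ to a general fibre $F$ of the Iitaka fibration and applies the first part of Theorem~\ref{ThB} to the $(n-\kappa)$-dimensional manifold $F$, at the cost of importing the fact $N(G)|F = N(G|F)$ from \cite{Z2} or \cite{NZ}. You instead stay on the total space and sharpen the mechanism of Lemma~\ref{reduction}: once a finite-index $G_1$ acts trivially on $Y$, the pulled-back classes $M_1,\dots,M_\kappa$ give $\chi_1 \equiv \cdots \equiv \chi_\kappa \equiv 1$ individually (rather than merely $\chi_1\cdots\chi_\kappa = 1$), so $\psi$ lands in $\{0\}^{\kappa}\times\BRR^{n-1-\kappa}$ and Claims~\ref{pfc1}--\ref{pfc2} finish the job. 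This is more self-contained relative to the machinery already set up in the paper.

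There is, however, a real gap in your justification that the image of $G$ in $\Aut(Y)$ is finite. The base $Y$ of the Iitaka fibration is \emph{not} of general type in general: for a properly elliptic surface $X \to \BPP^1$ with $\kappa(X) = 1$ the Iitaka fibration is the elliptic fibration itself and its base is $\BPP^1$. Invoking BCHM does not help --- finite generation only identifies $\SProj \bigoplus_m H^0(X,mK_X)$, which in that example is again $\BPP^1$ --- and $C_{n,m}$ applied to the Iitaka fibration yields only the triviality $\kappa(Y) \le \kappa(X) = \dim Y$. What you actually need, and what the paper obtains via \cite[Corollary~2.4]{NZ}, is the finiteness of the pluricanonical representation: the image of $\Bim(X) \to \GL(H^0(X, mK_X))$ is finite for every $m\ge 1$ (Deligne, Nakamura--Ueno; see \cite[Theorem~14.10]{Ue}). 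This gives finiteness of $G|Y$ directly, with no hypothesis on $\kappa(Y)$; once you substitute this for your general-type claim, the remainder of your argument goes through.
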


\begin{proof}
If $\kappa = n$, then $|\Aut(X)| < \infty$; see \cite[Corollary 2.4]{NZ}.
So assume that $\kappa < n$.
Let $\Phi := \Phi_{|mK_X|} : X \ratmap \BPP(H^0(X, mK_X))$
be the Iitaka fibration and $Y$ the image of $\Phi$.
By \cite[Corollary 2.4]{NZ}, $G$ descends to a {\it finite} group $G|Y \le \Aut(Y)$.
Replacing $X$ and $Y$ by $G$-equivariant resolutions of indeterminacy locus
of $\Phi$ and singularities of $Y$, we may assume that
$\Phi$ is holomorphic and $Y$ is smooth; see also \cite[Lemma A.8]{NZ}.
Replacing $G$ by a finite-index subgroup and noting that $r(G) = r(G_1)$ when $|G:G_1| < \infty$,
we may assume that $G | Y = \{\id\}$, and for a general fibre
$F$ of $\Phi$ our $G | H^{1,1}(F)$ is Z-connected (and also solvable because:
some term $G^{(v)}$ of the derived series of $G$ acts trivially on $H^{1,1}(X)$
by the assumption, and the restriction $G^{(v)}|H^{1,1}(F)$ is $Z$-connected and acts trivially
on the restriction to $F$ of every K\"ahler class of $X$ so that
we can apply Lieberman \cite[Proposition 2.2]{Li} as in $(*)$ of Lemma \ref{kappa-} below).
We can identify $G$ with $G | F \subset \Aut(F)$.
By \cite[(2.1) Remark(11)]{Z2} or \cite[Theorem D]{NZ},
$N(G)|F = N(G|F)$.
By the first part of Theorem \ref{ThB}, our $G = G|F$ satisfies
$G/N(G) \cong \BZZ^{\oplus r}$ with $r \le \dim(F) - 1 = n - 1 - \kappa$.
\end{proof}

\begin{lemma}\label{kappa-}
Suppose that
$\kappa^{-1} := \kappa(X, -K_X) > 0$.
Then $r(G) \le n-2$.
\end{lemma}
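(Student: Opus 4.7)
The plan is to split into two cases according to whether the anti-Iitaka dimension $\kappa^{-1}$ equals $n$ or is strictly smaller. The key observation used in both cases is that $[-K_X] \in H^{1,1}(X, \BRR)$ is $G$-invariant, since $g^{*}K_X = K_X$ for every $g \in \Aut(X)$, and hence every linear system $\LS{-mK_X}$ is $G$-stable.

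In the first case, $0 < \kappa^{-1} < n$, I would reduce to Lemma \ref{reduction}. For $m \gg 0$ the anti-canonical Iitaka map $\Phi = \Phi_{\LS{-mK_X}} : X \ratmap Y \subset \BPP(H^{0}(X, -mK_X))$, with image $Y$ of dimension $\kappa^{-1}$, is $G$-equivariant for the linear $G$-action on the target. After successive $G$-equivariant resolutions of the ($G$-stable) indeterminacy locus of $\Phi$ and of the singularities of $Y$, one obtains a $G$-equivariant surjective holomorphic map $\pi : \tilde{X} \to \tilde{Y}$ with $\tilde{Y}$ smooth projective of dimension $\kappa^{-1}$, together with a $G$-equivariant birational morphism $\tilde{X} \to X$. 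Since topological entropy is a birational invariant for automorphisms, $r(G) = r(G|\tilde{X})$; after passing to a finite-index subgroup (which preserves $r$) so that $G|H^{1,1}(\tilde{X})$ is also solvable and Z-connected, Lemma \ref{reduction} applied to $\pi$ yields $r(G) \le n-2$.

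In the second case, $\kappa^{-1} = n$, where $-K_X$ is big, I would exploit the intersection $M_1 \cdots M_{n-1} \cdot [-K_X]$, using the sequence $M_1, \dots, M_n$ and characters $\chi_i$ constructed in the proof of the first part of Theorem \ref{ThB}. By the K\"ahler analogue of Kodaira's lemma for big classes, one can write $[-K_X] = [\omega] + [T]$ in $H^{1,1}(X, \BRR)$ with $[\omega] \in \SK(X)$ K\"ahler and $[T]$ pseudo-effective. Since $M_1 \cdots M_{n-1} \in \overline{P^{n-1}(X)}$ by Lemma \ref{pre}(1), pairing with $[\omega]$ gives a strictly positive number by Lemma \ref{pre}(1), while pairing with $[T]$ is non-negative because a semi-positive $(n-1, n-1)$-class paired with a closed positive $(1,1)$-current is $\ge 0$. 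Hence $M_1 \cdots M_{n-1} \cdot [-K_X] > 0$. Applying $g^{*}$ and using that $g$ acts trivially on $H^{2n}(X, \BRR)$ (as $\deg g = 1$), the identity
\[
M_1 \cdots M_{n-1} \cdot [-K_X] = \chi_1(g) \cdots \chi_{n-1}(g) \cdot M_1 \cdots M_{n-1} \cdot [-K_X]
\]
forces $\chi_1(g) \cdots \chi_{n-1}(g) = 1$ for every $g \in G$. Combined with the earlier identity $\chi_1 \cdots \chi_n = 1$, this extra relation confines the image $\psi(G)$ to the codimension-one subspace $\{x_1 + \cdots + x_{n-1} = 0\}$ of $\BRR^{n-1}$, giving $r(G) \le n-2$.

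The main obstacle lies in Case 2: justifying the decomposition $[-K_X] = [\omega] + [T]$ as K\"ahler plus pseudo-effective in the genuinely non-projective K\"ahler setting, and the non-negativity of the intersection pairing between an element of $\overline{P^{n-1}(X)}$ and a pseudo-effective class. Both are standard consequences of Demailly's theory of closed positive currents on compact K\"ahler manifolds; in the projective sub-case the classical Kodaira lemma and positivity of intersection with effective divisors suffice.
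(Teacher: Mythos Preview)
Your proof is correct. Case~1 matches the paper's argument essentially verbatim. In Case~2 ($\kappa^{-1} = n$) you take a genuinely different route from the paper: there, one passes to a $G$-equivariant blowup on which the movable part $|S|$ of $|{-mK_X}|$ becomes free, so that $S$ is nef and big with $G$-fixed class in $H^{1,1}$; Lieberman's theorem \cite[Proposition~2.2]{Li} then gives $|G : G \cap \Aut_0(X)| < \infty$, and since $\Aut_0(X)$ acts trivially on cohomology one obtains the stronger conclusion $r(G) = 0$. Your argument instead stays on $X$, uses the strict positivity of $M_1 \cdots M_{n-1} \cdot [-K_X]$ (via Kodaira's decomposition and Lemma~\ref{pre}(1)) together with $g^*[-K_X] = [-K_X]$ to force $\chi_1 \cdots \chi_{n-1} \equiv 1$, confining $\psi(G)$ to a hyperplane. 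This is more self-contained---it uses only the quasi-nef machinery already developed in the paper and avoids the external citation---at the price of the weaker (but sufficient) bound $r \le n-2$.

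One remark: your concern about justifying the K\"ahler-plus-pseudo-effective decomposition in the non-projective K\"ahler setting is moot in Case~2. If $\kappa(X, -K_X) = n$ then $X$ is Moishezon, and being K\"ahler it is projective; so the classical Kodaira lemma and ordinary intersection positivity with effective divisors already suffice, as you note in your final sentence.
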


\begin{proof}
Take $m >> 0$
so that the movable part $|S|$ of $|-mK_X|$
gives rise to a meromorphic map $\Psi : X \ratmap Z$
with $\dim Z = \kappa^{-1}$. Now $h^*S \sim S$ for all $h \in \Aut(X)$.
Replacing $X$ and $Z$ by their $G$-equivariant blowups,
we may assume that $|S|$ is base point free so that
$\Psi$ is holomorphic (but associated with a linear system
bigger than $|-mK_X|$),
$Z$ is smooth, and $G$ descends to $G|Z \le \Aut(Z)$.
If $\dim Z < \dim X$, then $r(G) \le n-2$ by Lemma \ref{reduction}.

Consider the case $\dim Z = \dim X$. Then
$S$ is nef and big,
whose class in $H^{1,1}(X, \BRR)$ is preserved by $G$,
so the argument of Lieberman \cite[Proposition 2.2]{Li} implies that $(*)$:
$|G : G \cap \Aut_0(X)| < \infty$; see \cite[Lemma 2.23]{Z2}
and also \cite[Lemma A.5]{NZ}.
Since every element of $\Aut_0(X)$ acts trivially on $H^*(X, \BZZ)$, it
is of null entropy and so does
every element of $G$. Thus $r(G) = 0$.
This proves Lemma \ref{kappa-}.
\end{proof}

\begin{lemma}\label{q}
Suppose that the irregularity $q(X) > 0$.
Then $r(G) \le n-2$, unless $X$ is bimeromorphic
to a complex torus.
\end{lemma}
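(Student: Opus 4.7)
The plan is to use the earlier lemmas to reduce to low Kodaira dimension and then analyse the Albanese map. By Lemma~\ref{kappa}, if $\kappa(X) \ge 1$ then $r(G) \le n-1-\kappa(X) \le n-2$, so I may assume $\kappa(X) \le 0$. Since $q(X) > 0$, the Albanese torus $A := \Alb(X)$ has positive dimension, and the Albanese map $a \colon X \to A$ is canonical; each $g \in G$ therefore induces an affine automorphism of $A$ preserving the image $Y := a(X)$, so $a$ is $G$-equivariant and $G$ acts biregularly on both $A$ and $Y$.

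If $\dim Y < \dim X$, I would take a $G$-equivariant desingularization $\widetilde{Y} \to Y$ followed by a $G$-equivariant resolution $\widetilde{X} \to X$ of the indeterminacy of the induced rational map $X \ratmap \widetilde{Y}$, producing a $G$-equivariant holomorphic surjection $\widetilde{X} \to \widetilde{Y}$ between compact K\"ahler manifolds with $\dim \widetilde{Y} < \dim \widetilde{X} = n$. Because first dynamical degrees of automorphisms are preserved under $G$-equivariant bimeromorphic modifications (and passage to a finite-index subgroup does not alter the rank), $r(G)$ is unchanged, so Lemma~\ref{reduction} applied to $\widetilde{X} \to \widetilde{Y}$ delivers the desired bound $r(G) \le n-2$.

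It remains to handle $\dim Y = \dim X$, where $a$ is generically finite onto $Y$. The universal property of the Albanese prevents $Y$ from being contained in any proper translate of an abelian subvariety of $A$; in particular, if $Y \subsetneq A$ then $Y$ is not itself a translate of an abelian subvariety, so Ueno's structure theorem for subvarieties of abelian varieties forces $\kappa(Y) \ge 1$. Since $X \to Y$ is generically finite and surjective, this gives $\kappa(X) \ge \kappa(Y) \ge 1$, contradicting our reduction $\kappa(X) \le 0$. Hence $Y = A$, so $a$ is surjective and generically finite onto the torus $A$. Viehweg's easy addition (finite general fibre, $\kappa(A) = 0$) then yields $\kappa(X) \ge 0$, whence $\kappa(X) = 0$; and the K\"ahler analogue of Kawamata's characterisation of complex tori forces the generically finite Albanese map $a$ to have connected fibres, so $a$ is birational and $X$ is bimeromorphic to the complex torus $A$. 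The main obstacle I foresee is this last step, namely correctly invoking a K\"ahler version of Kawamata's Albanese theorem in the generically finite regime; the $G$-equivariant resolutions and the birational invariance of $r(G)$ under them are routine, as is the application of Lemma~\ref{reduction}, so all genuinely new input is bimeromorphic-geometric rather than dynamical.
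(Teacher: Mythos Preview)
Your proof is correct and follows essentially the same route as the paper: split according to the dimension of the Albanese image, apply Lemma~\ref{reduction} when $\dim Y < n$, and in the generically finite case deduce $\kappa(X)=0$ and invoke Kawamata's theorem. The paper is slightly more direct in the second case, citing Ueno to get $\kappa(Y)\ge 0$ for any subvariety of a torus (hence $\kappa(X)\ge 0$ immediately) rather than first arguing $Y=A$ via the dichotomy $\kappa(Y)=0$ versus $\kappa(Y)\ge 1$, but the content is the same.
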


\begin{proof}
Let $\alb_X : X \to \Alb(X)$ be the albanese map with
$q(X) = \dim \Alb(X)$. Let $Y := \alb_X(X)$ be the image.
If $\dim Y < n$, then $r(G) \le n-2$ by Lemma \ref{reduction}.

Suppose that $\dim Y = n$. Then $\alb_X$ is generically finite onto
its image and hence $\kappa(X) \ge \kappa(Y) \ge 0$,
since $Y$ is a subvariety of a complex torus
(cf. \cite[Lemma 10.1]{Ue}).
By Lemma \ref{kappa}, we may assume that $\kappa(X) \le 0$.
Thus $\kappa(X) = 0$. By \cite[Theorem 24]{Ka}, $\alb_X : X \to \Alb(X)$
is then surjective and bimeromorphic. This proves Lemma \ref{q}.
\end{proof}

\begin{lemma}\label{aut}
Suppose that $H$ is a positive-dimensional connected and closed subgroup
of $\Aut_0(X)$ so that $H$ is normalized by $G$, i.e.,
$G \le N_{\Aut_0(X)}(H)$. Then $r(G) \le n-2$, unless
some $H$-orbit is dense open in $X$.
\end{lemma}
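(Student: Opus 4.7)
The plan is to convert the hypothesis ``no $H$-orbit is dense open in $X$'' into a genuine $G$-equivariant holomorphic fibration with positive-dimensional base of dimension strictly less than $n$, so as to invoke Lemma \ref{reduction}. Assume no $H$-orbit is dense open in $X$. Since $H$ is a connected positive-dimensional complex Lie subgroup of $\Aut_0(X)$, its orbits are immersed complex submanifolds, and by upper semicontinuity of orbit dimension the generic $H$-orbit has a fixed dimension $d$ with $1 \le d \le n-1$. In the category of compact K\"ahler spaces I can form the meromorphic quotient for the $H$-action on $X$---for instance via the meromorphic map $x \mapsto \overline{Hx}$ into an appropriate component of the Barlet space of cycles of $X$, or via Fujiki's meromorphic quotient theorem for complex Lie group actions on K\"ahler manifolds. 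This produces a surjective meromorphic map
\[
\pi_0 : X \ratmap Y_0
\]
onto a normal compact K\"ahler space $Y_0$ of dimension $n-d \ge 1$, whose general fibres are closures of general $H$-orbits.

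Next, I will make $\pi_0$ holomorphic and $G$-equivariant. Because $G$ normalizes $H$, for every $g \in G$ and $x \in X$ one has $g(Hx) = (gHg^{-1})(gx) = H(gx)$, so $G$ permutes $H$-orbits and $\pi_0$ is $G$-equivariant for the induced meromorphic $G$-action on $Y_0$. Applying functorial---hence automatically $G$-equivariant---resolution of singularities and of indeterminacies, I pass to smooth compact K\"ahler models $\tilde X \to X$ and $\tilde Y \to Y_0$ fitting into a $G$-equivariant commutative diagram with a holomorphic surjection $\pi : \tilde X \to \tilde Y$ on which $G$ still acts biregularly; by construction $0 < \dim \tilde Y = n-d < n$.

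Before invoking Lemma \ref{reduction} one should verify that passing from $X$ to $\tilde X$ is harmless. After replacing $G$ by a finite-index subgroup acting trivially on the exceptional classes---which does not change $r(G)$---the action $G \mid H^{1,1}(\tilde X)$ is block upper triangular with respect to the pullback decomposition $H^{1,1}(X) \oplus (\text{exceptional})$, with trivial action on the second summand, so solvability and Z-connectedness transfer from $X$ to $\tilde X$; moreover first dynamical degrees, and hence $N(G)$ and $r(G)$, are bimeromorphic invariants under such $G$-equivariant modifications. Lemma \ref{reduction} applied to $\pi : \tilde X \to \tilde Y$ then yields $r(G) \le n-2$.

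The main obstacle will be the construction of $\pi_0$ together with its normality and $G$-equivariance in the general non-projective K\"ahler setting, which depends on the theory of meromorphic quotients of K\"ahler manifolds by complex Lie group actions; once $\pi_0$ is available, the remaining reduction to Lemma \ref{reduction} is essentially formal.
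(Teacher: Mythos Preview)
Your proposal is correct and follows essentially the same approach as the paper: construct Fujiki's meromorphic quotient $X \ratmap Y = X/H$ (the paper cites \cite{Fu} for this and \cite{Va} for the K\"ahler structure on the base), pass to $G$-equivariant K\"ahler resolutions, and invoke Lemma \ref{reduction}. The paper is terser about why solvability, Z-connectedness, and $r(G)$ survive the blowup, whereas you spell this out, but the strategies are identical.
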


\begin{proof}
By \cite[Lemma 4.2 (3), Theorem 4.1, Theorem 5.5]{Fu},
there is a quotient map $\pi: X \ratmap Y = X/H$
which is $G$-equivariant (so that $H$ acts on $Y$ trivially);
replacing $X, Y$ by $G$-equivariant blowups
as in \cite[(2.0)]{Fu},
we may assume that $\pi$ is holomorphic and
both $X$ and $Y$ are K\"ahler $G$-manifolds, because
$Y$ is in Fujiki's class $\SC$ and hence bimeromorphic to a
compact K\"ahler manifold (cf. Varouchas \cite{Va}).
Now just apply Lemma \ref{reduction}.
\end{proof}

\begin{lemma}\label{uniruled}
Suppose that $X$ is a uniruled projective manifold. Then
$r(G) \le n-2$, unless $X$ is rationally connected.
\end{lemma}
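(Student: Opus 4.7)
The plan is to reduce to Lemma~\ref{reduction} via the maximal rationally connected (MRC) fibration of $X$ due to Koll\'ar--Miyaoka--Mori and Campana. Let $\pi: X \ratmap Y$ be the MRC quotient. Since $X$ is uniruled, through a general point passes a rational curve; such a curve is rationally connected and hence contained in a single MRC equivalence class, so the general fibre of $\pi$ has positive dimension and $\dim Y < n$. Under the hypothesis that $X$ is not rationally connected, $Y$ is not a point, whence $\dim Y \ge 1$.

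The MRC quotient is canonical up to birational equivalence, and biregular automorphisms of $X$ permute MRC equivalence classes, so $\pi$ is $G$-equivariantly defined and induces a birational action of $G$ on $Y$. By a simultaneous $G$-equivariant resolution of the indeterminacy of $\pi$ and of the singularities of $Y$ --- in the same spirit as the reductions used in Lemmas~\ref{kappa-} and \ref{aut} --- I may assume that $\pi: X \to Y$ is a surjective $G$-equivariant holomorphic map between smooth projective $G$-manifolds with $0 < \dim Y < n$. The rank $r(G)$ is unchanged under such equivariant modifications, since the first dynamical degree $d_1(g)$ is a birational invariant for automorphisms that lift, and the solvability and Z-connectedness of $G \, | \, H^{1,1}(X)$ (as well as of $G \, | \, H^{1,1}(Y)$, which appears as a $G$-subrepresentation via the injection $\pi^*$) are preserved. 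Lemma~\ref{reduction} then yields $r(G) \le n-2$.

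The main obstacle is to set up the MRC quotient $G$-equivariantly with a holomorphic model: the MRC fibration is only almost holomorphic on $X$, and its indeterminacy must be resolved by a $G$-equivariant blow-up (e.g., by Hironaka-type resolution applied to a $G$-invariant ideal cutting out the indeterminacy locus), after which the induced map to a smooth model of $Y$ is $G$-equivariant and holomorphic. Once this is arranged and the birational invariance of $r(G)$ under equivariant blow-ups is noted, the conclusion follows immediately from Lemma~\ref{reduction}.
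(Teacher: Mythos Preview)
Your proposal is correct and follows essentially the same route as the paper: both invoke the rationally connected (MRC) fibration $X \ratmap Y$, arrange it $G$-equivariantly with a holomorphic model via equivariant resolution (the paper does this by citing \cite[Lemma~5.2]{NZ}), and then apply Lemma~\ref{reduction} when $0 < \dim Y < n$. The only point where you are slightly looser than the paper is in passing from the a priori \emph{birational} $G$-action on $Y$ to a \emph{biregular} one before resolving---this is exactly what the cited reference supplies---but the overall strategy is the same.
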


\begin{proof}
Applying the argument in \cite[Lemma 5.2]{NZ} and taking equivariant resolutions,
we may assume that the rationally connected fibration $X \to Y$ is holomorphic with $Y$ smooth
(and non-uniruled)
so that $G$ descends to $G|Y \subseteq \Aut(Y)$. If $\dim Y > 0$, then
$r(G) \le n-2$ by Lemma \ref{reduction}.
If $\dim Y = 0$, then $X$ is rationally connected.
\end{proof}

\begin{lemma}\label{algdim}
If the algebraic dimension $a(X) \in \{1, \dots, n-1\}$,
then $r(G) \le n-2$.
\end{lemma}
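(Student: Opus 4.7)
The plan is to invoke the algebraic reduction of $X$ and then apply Lemma~\ref{reduction}. Since $0 < a(X) < n$, the algebraic reduction (cf.\ \cite[\S 3.2]{Ue}) gives a canonical meromorphic map $\Phi \colon X \ratmap Y$ with $Y$ a projective variety of dimension $a(X) \in \{1, \dots, n-1\}$ and $\BCC(Y) = \BCC(X)$. By the canonicity of this reduction, every $g \in G$ permutes the meromorphic functions on $X$ and hence descends to a bimeromorphic self-map of $Y$, giving a homomorphism $G \to \Bir(Y)$.

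Next I would regularize this data. Using $G$-equivariant resolution of indeterminacy of $\Phi$ and of singularities of $Y$ (as used in the proofs of Lemma~\ref{kappa} and Lemma~\ref{uniruled}, and via \cite[Lemma A.8]{NZ}), I would replace $X$ and $Y$ by bimeromorphic models so that $\Phi$ becomes a $G$-equivariant surjective holomorphic map between smooth compact K\"ahler manifolds; the new $X$ remains K\"ahler as a blow-up of a K\"ahler manifold, and the new $Y$ is smooth projective, hence K\"ahler. Passing to a finite-index subgroup if needed (which preserves $r(G)$), the hypothesis that $G|H^{1,1}(X)$ is solvable and Z-connected carries over to the new model, as in Remark~\ref{Zcr1}.

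With the regularized $G$-equivariant surjection $\pi \colon X \to Y$ and $k := \dim Y \in \{1, \dots, n-1\}$ in hand, Lemma~\ref{reduction} immediately yields $r(G) \le n-2$, which is the statement of Lemma~\ref{algdim}. The only real difficulty in this outline is the $G$-equivariant regularization of both $\Phi$ and the bimeromorphic $G$-action on $Y$; however, this is a standard application of equivariant resolution techniques already invoked repeatedly in the paper, and introduces no genuinely new ingredient beyond what has already been set up for Lemma~\ref{reduction}.
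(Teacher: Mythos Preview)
Your overall strategy (algebraic reduction, regularize, then invoke Lemma~\ref{reduction}) coincides with the paper's, but there is a genuine gap in your regularization step. The analogy with Lemmas~\ref{kappa} and \ref{kappa-} is misleading: there the target $Y$ sits inside a projective space on which $G$ acts \emph{linearly} (via its action on sections of $mK_X$ or $-mK_X$), so $G|Y$ is biregular from the outset, and in Lemma~\ref{kappa} one even reduces to $G|Y$ finite before resolving. For the algebraic reduction, by contrast, $Y$ is only well-defined up to bimeromorphic equivalence, and all you obtain a priori is a homomorphism $G \to \Bir(Y)$. Functorial (hence equivariant) resolution applies to \emph{biregular} group actions; it does not turn an infinite bimeromorphic action into a biregular one, and the reference to \cite[Lemma~A.8]{NZ} does not address this. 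So as written, you have not produced a compact K\"ahler $Y$ with a biregular $G$-action and a $G$-equivariant holomorphic map $X \to Y$, which is exactly what Lemma~\ref{reduction} requires.

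The paper closes this gap by passing to the Douady space: one takes $Y'$ to be the irreducible component of $D_X$ parametrizing the general fibres of the algebraic reduction, and $X' \to Y'$ the restriction of the universal family. Because $\Aut(X)$ acts naturally and biregularly on $D_X$, the group $G$ acts biregularly on $X'$ and $Y'$ with $X' \to Y'$ $G$-equivariant and bimeromorphic to $X \ratmap Y$. One then resolves $X'$ and $Y'$ $G$-equivariantly (now legitimate, since the actions are biregular) and applies Lemma~\ref{reduction}. This Douady/Chow-scheme trick is precisely the extra ingredient your outline is missing.
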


\begin{proof}
Let $X \ratmap Y$ be an algebraic reduction with $\dim Y = a(X)$ and connected fibres
(cf. \cite[Prop. 3.4]{Ue}).
Let $Y'$ be the main compact irreducible subvariety in the Douady space $D_X$ of $X$
parametrizing the general fibres of $X \ratmap Y$, and $X' \to Y'$ the restriction
of the universal family $Z \to D_X$.
As pointed out by Fujiki, $G$ acts biregularly on $X'$ and $Y'$
so that the holomorphic map $X' \to Y'$ is $G$-equivariant and bimeromorphic to $X \ratmap Y$.
Now just apply Lemma \ref{reduction} to the $G$-equivariant resolutions of $X'$ and $Y'$.
\end{proof}

We now complete the proof of Theorem \ref{ThB}.
By Lemmas \ref{kappa}, \ref{kappa-}, \ref{q} and \ref{algdim},
we may assume that $q(X) = 0$, $\kappa(X) = -\infty$, $\kappa^{-1}(X) \le 0$ and $a(X) \in \{0, n\}$.
Now $q(X) = 0$ implies that $\Aut_0(X)$ is a linear algebraic group;
see \cite[Corollary 5.8, Theorem 5.5]{Fu}.

Suppose that $H \le \Aut_0(X)$ is connected and closed and
has a dense open orbit in $X$. Then $X$ is almost homogeneous, Moishezon (and
hence projective, $X$ being K\"ahler)
and unirational, because $H$ is a rational variety by a result of Chevalley.
$X$ is birational to $\BPP^1 \times Z$ with $Z$ projecitve, by a result of H. Matsumura;
see \cite[Proposition 5.10 and its Remark]{Fu}.
The unirationality of $X$
implies that of $Z$; and a unirational surface is rational.

If $H$ is further commutative, then $X$ is rational because: the stabilizer $H_x$
of a general point of $x \in X$ is normal in $H$ and hence $H_x = H_y$
for all general $y \in X$ (by the density of the orbit $Hx$ in $X$),
so $H_x$ is trivial and
$H$ dominates $X$ birationally.

Now Theorem \ref{ThB} follows from Lemma \ref{uniruled}, the argument above and
the application of Lemma \ref{aut} to characteristic subgroups of $\Aut_0(X)$, especially to $\Aut_0(X)$
and to the last term (an abelian group) of the derived series of $R(\Aut_0(X))$ (the radical).

\vskip 1pc
Question \ref{Q1} below has been affirmatively answered in \cite[pp. 323-325]{DS04}
for abelian $G$ and without the condition $\Aut_0(X) = (1)$ (but this condition is
needed for non-abelian $G$
as seen in \cite[Example 4.5]{DS04}),
a short proof of which follows immediately from
Lemma \ref{aut} and Lieberman \cite[Proposition 2.2]{Li} just as in $(*)$ of Lemma \ref{kappa-}.

\begin{question}\label{Q1}
In Theorem $\ref{ThB}$, when the rank $r = n-1$ $($maximal$)$ and $\Aut_0(X) = (1)$,
can one say that $N(G)$ is a finite group?
\end{question}

\begin{question}
Can one find a rationally connected variety
$($or Calabi Yau manifold$)$
$X$ and a group $G \le \Aut(X)$
such that the rank in Theorem $\ref{ThB}$ satisfies $r =  \dim X -1$?
\end{question}

\end{document}